\theoremstyle{plain}
\newtheorem{thm}{Theorem}[section]
\newtheorem{cor}[thm]{Corollary}
\newtheorem{prop}[thm]{Proposition}
\newtheorem{lem}[thm]{Lemma}
\theoremstyle{definition}
\newtheorem{no}[thm]{Notation}
\newtheorem{ex}[thm]{Example}
\newtheorem{re}[thm]{Remark}
\newtheorem{de}[thm]{Definition}
\newtheorem{ass}[thm]{Assumption}
\newcommand{\M}{{\mathbf{M}}}                  
\newcommand{\R}{{\mathbf{R}}}                  
\newcommand{\Rcond}[1]{{\mathbf{R}_{{#1}}}}                  
\newcommand{\tet}{{\operatorname{Tet}}}        
\newcommand{\fer}{{\operatorname{Fer}}}        
\newcommand{\gr}{{\Gamma}}                     
\newcommand{\den}{{\delta}}                    
\newcommand{\V}{{\mathrm{V}}}                  
\newcommand{\E}{{\mathrm{E}}}                  
\newcommand{\F}{{\mathrm{F}}}                
\newcommand{\D}[2]{\mathrm{D}({#2},{#1})}      
\newcommand{\dD}{\partial\D}
\newcommand{\dist}[2]{\mathrm{d}({#2},{#1})}   
\newcommand{\aut}[1][G]{{\operatorname{Aut}({#1})}}     
\newcommand{\autp}[1][G]{{\operatorname{Aut}^+({#1})}} 
\newcommand{\elt}{{g}}                                  
\newcommand{\primi}{{J}}                        
\newcommand{\primt}{{j}}                       
\newcommand{\rotv}{{S}}     
\newcommand{\rotf}{{R}}     
\newcommand{\id}{{\operatorname{Id}}}
\newcommand{\ZZ}{{\mathbb Z}}
\newcommand{\lowprime}{\raisebox{0.2ex}{$\scriptstyle\prime$}}
\newcommand{\Sym}{\textrm{Sym}}
\newcommand\pgfmathsinandcos[3]{%
  \pgfmathsetmacro#1{sin(#3)}%
  \pgfmathsetmacro#2{cos(#3)}%
}
\newcommand\LongitudePlane[3][current plane]{%
  \pgfmathsinandcos\sinEl\cosEl{#2} 
  \pgfmathsinandcos\sint\cost{#3} 
  \tikzset{#1/.estyle={cm={\cost,\sint*\sinEl,0,\cosEl,(0,0)}}}
}
\tikzset{%
  >=latex, 
  inner sep=0pt,%
  outer sep=2pt,%
  mark coordinate/.style={inner sep=0pt,outer sep=0pt,minimum size=3pt,
    fill=black,circle}%
}
\begin{document}
\title{Regular maps of high density}
\author[R.H.~Eggermont]{Rob H. Eggermont}
\address[Rob H. Eggermont]{
Department of Mathematics and Computer Science\\
Technische Universiteit Eindhoven\\
P.O. Box 513, 5600 MB Eindhoven, The Netherlands}
\email{r.h.eggermont@tue.nl}

\author[M.~Hendriks]{Maxim Hendriks}
\address[Maxim Hendriks]{Zermelo Roostermakers\\
Rhijngeesterstraatweg 40P, 2341 BV Oegstgeest, The Netherlands}
\email{mhendriks@zermelo.nl}
\maketitle
\tableofcontents

\begin{abstract} A regular map is a surface together with an embedded graph, having properties similar to those of the surface and graph of a platonic solid. We analyze regular maps with reflection symmetry and a graph of density strictly exceeding $\frac{1}{2}$, and we conclude that all regular maps of this type belong to a family of maps naturally defined on the Fermat curves $x^n+y^n+z^n=0$, excepting the one corresponding to the tetrahedron.
\end{abstract}


\section{Introduction}

Objects of high symmetry have been of interest for a long time. Thousands of years ago, the Greeks already studied the platonic solids, regular convex polyhedra with congruent faces of regular polygons such that the same number of faces meet at each vertex. The Greeks proved that there are only five of them: the tetrahedron, the cube, the octahedron, the dodecahedron, and the icosahedron.

One can consider the platonic solids to be orientable surfaces of genus $0$ with an embedded graph satisfying certain properties. The natural question is then whether this concept can be generalized. This leads to the notion of regular maps. Regular maps have also been studied for many years (see for example \cite{Siran2006}).

In contrast to the succinct and complete list of platonic solids, a complete classification for regular maps seems far off, but at least a few families of regular maps are known. Investigations into these objects is of great interest, amongst other reasons because of their intriguing connection to algebraic curves. Regular maps are special cases of \emph{dessins d'enfants} \cite{schneps94}. As such, the combinatorial structure of vertices, edges, and faces gives rise not only to a topological realisation, but even to a unique algebraic curve! Explicitly computing an ideal that defines some complex projective realisation of this algebraic curve for a given regular map is an ongoing area of research.


Two happy examples of a whole family of regular maps for which the algebraic curves are known, are naturally defined on the Fermat curves $x^n+y^n+z^n=0$. For a given Fermat curve one has to consider the action on this complex algebraic curve (and hence a real surface) of its group of algebraic automorphisms, which turn out to be realizable by linear maps in $\mathbb{C}^3$. A map presentation (see Section~\ref{sec:background}) of these maps was described concisely by Coxeter and Moser (see \cite{coxmos1980}), although they did not hint at the link to the Fermat curves, and were perhaps not aware of it. We give a more detailed description of the Fermat family in Section~\ref{sec:background}.

Regular maps can broadly be divided into two classes: chiral and reflexive. We will deal only incidentally with chiral maps. In low genus, all regular maps have been computed, with a list of reflexive maps up to genus $15$ appearing in \cite{condob2001} and with more recent lists at \cite{conderweb}, which runs up to genus $301$ for reflexive maps at the time of writing.

The combinatorial aspect of a regular map that is the focus of this paper is its (graph) density, defined in Section \ref{sec:background}.  One discovers from studying the maps of low genus, that having a high density is relatively rare. In fact, the only reflexive regular maps of low genus with simple graphs of density strictly exceeding $\frac{1}{2}$ are members of the Fermat family (every vertex in its graph being connected to precisely two thirds of the vertices), the sole exception being the regular map $\tet$ on the sphere, corresponding to the tetrahedron. This naturally leads to the question whether this is true in higher genus as well. This paper answers this question by classifying all reflexive regular maps with simple graphs of density strictly exceeding $\frac{1}{2}$.

\begin{thm}[Regular map density theorem]\label{thm:main} Let $\R$ be a reflexive regular map with simple graph of density strictly exceeding $\frac{1}{2}$. Then either $\R$ is $\tet$ or $\R$ is a member of the Fermat family.
\end{thm}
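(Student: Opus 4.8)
The plan is to pass to the group-theoretic encoding recalled in Section~\ref{sec:background}: a reflexive regular map $\R$ of type $\{p,q\}$ amounts to a finite group $G=\aut[\R]$ with a distinguished generating triple of involutions, $G$ being a smooth quotient of the full $(p,q,2)$-triangle group, with vertex-, edge- and face-stabilisers of orders $2q$, $4$, $2p$ and $|G|=2q\,|\V|=4\,|\E|=2p\,|\F|$. Since the graph of $\R$ is simple it has no loops and no multiple edges, so $\R$ is non-degenerate and $p,q\ge 3$. With the normalisation of Section~\ref{sec:background} the density hypothesis becomes the single inequality $|\V|<2q$, equivalently $|G|<4q^{2}$: the graph has fewer vertices than twice its valency. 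Fix a vertex $x$; it has $q$ neighbours and at most $q-2$ further non-neighbours, and the vertex-stabiliser $H\le G$, dihedral of order $2q$, acts on the $q$ neighbours as the natural dihedral permutation group and also on the small set $U$ of non-neighbours, $|U|\le q-2$. A cheap first consequence is that every $u\in U$ has $|N(u)\cap N(x)|\ge q-|U|+1\ge 3$, because $N(u)\subseteq N(x)\cup(U\setminus\{u\})$.

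The crux of the argument -- and the step I expect to be the main obstacle -- is to upgrade this to the statement that non-adjacency of vertices of $\R$ is an equivalence relation; equivalently, that two non-adjacent vertices always have the same neighbourhood, so that the graph of $\R$ is complete multipartite, $K_{n_{1},\dots ,n_{k}}$. I would prove this by combining the disc structure of the star of $x$ (the faces at $x$ tile a disc) with the very limited ways the dihedral group $H$ can act on the set $U$ of size $\le q-2$: one tracks the $H$-orbit of a hypothetical edge contained in $U$, together with the faces straddling it, and plays this off against $|\V|<2q$ to force coincidence of neighbourhoods. Pushing this through also pins down the type -- I expect $p=3$ to drop out, as it must since $\tet$ and every Fermat map have triangular faces -- after which the star of $x$ is literally a $q$-cycle and the remaining neighbourhood bookkeeping is manageable; carried to its conclusion the analysis should leave only the complete graphs and the graphs $K_{n,n,n}$.

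What remains is combinatorial assembly. Vertex-transitivity of $\aut[\R]$ makes all parts of the multipartition equal, of some common size $n$, so the graph is $K_{n,\dots ,n}$ on $k$ parts with $|\V|=kn$, $q=(k-1)n$, and $|\V|<2q$ reads $k\ge 3$. One then classifies the reflexive regular maps carrying such a graph. For $n=1$ the graph is $K_{k}$, and here one invokes the classical fact that $K_{k}$ admits no reflexible regular embedding for $k\ge 5$, while $K_{4}$ gives exactly $\tet$ (and $K_{3}$ only the degenerate member of the Fermat family). For $k=3$, $n\ge 2$ one recognises, via the presentation in Section~\ref{sec:background}, precisely the family of maps on the Fermat curves $x^{n}+y^{n}+z^{n}=0$ (type $\{3,2n\}$, $|\aut[\R]|=12n^{2}$, graph $K_{n,n,n}$). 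Finally one must exclude $k\ge 4$ with $n\ge 2$: no reflexive regular map has graph $K_{n,\dots ,n}$ with four or more equal parts. This last case requires a genuine argument rather than mere counting, since such graphs do occur as graphs of chiral or otherwise non-reflexible maps. Assembling the cases yields the dichotomy of Theorem~\ref{thm:main}.
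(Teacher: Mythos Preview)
Your proposal is a plan rather than a proof, and you are candid about this: the ``crux'' step (that non-adjacency is an equivalence relation, so the graph is complete multipartite) and the exclusion of $k\ge 4$ parts are both explicitly left open. These are genuine gaps, not routine verifications. Let me compare with the paper's route and flag a third gap you seem to be underestimating.

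The paper avoids your $k\ge 4$ problem entirely by organising the argument differently. It first proves $p=3$ directly (Proposition~\ref{Prop:p=3}), by a short argument using the rotation $\rotf_f^2$ and the density bound --- this does not ``drop out'' of a later analysis but is established up front. Once faces are triangles, the paper introduces \emph{diagonal alignment}: $v'\sim v$ if $v'=\rotv_w^2(v)$ for some common neighbour $w$, extended transitively. By construction this equivalence has at most three classes (one for each corner of any fixed triangular face), so four or more parts never enter the picture. The hard work is then to show that the diagonal-alignment classes are precisely the non-adjacency classes --- equivalently that $\den(\R)=\tfrac{2}{3}$ exactly --- and this is done by an inductive quotient argument (Proposition~\ref{dens2/3}) controlled by the \emph{primitive period} $\primi$ and \emph{even period} $\primt$ (Lemmas~\ref{lem:diagJ}--\ref{j=q->q=2}). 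Your direct attack on non-adjacency via the dihedral action of $H$ on the small set $U$ may be workable, but the diagonal-alignment device is the paper's key idea: it gives the three-part structure for free and replaces your unproved $k\ge 4$ exclusion with a tautology.

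The gap you most underestimate is the $k=3$ case itself. Knowing that the graph is $K_{n,n,n}$ with $p=3$ does not by itself force $\R=\fer(n)$: one must establish the extra relator $[\rotf,\rotv]^3=\id$ in $\autp[\R]$, after which $\autp[\R]$ is a quotient of $\autp[\fer(n)]$ and equality follows by comparing orders. The paper obtains this relator only after proving the delicate fact $\primt=2$ (Proposition~\ref{j=2}), i.e.\ that $\rotv_v^2$ already fixes every vertex diagonally aligned with $v$; the commutator computation is then a few lines. Your sentence ``one recognises, via the presentation in Section~\ref{sec:background}, precisely the family of maps on the Fermat curves'' hides exactly this difficulty: without $\primt=2$ or an equivalent substitute there is no mechanism forcing the presentation to collapse to that of $\fer(n)$.
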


Before proving this theorem in Section~\ref{sec:mainthm}, we will give the definition of a regular map, show some properties of regular maps, and define the Fermat maps. 

\section{Background}\label{sec:background}
Let us now give a more formal definition of regular maps.

\begin{de} A \emph{map} $\M$ is an orientable surface $\Sigma$ together with an embedded connected finite graph $\gr$ with non-empty vertex set $\V$ and non-empty edge set $\E$ such that the complement of $\gr$ in $\M$ is a finite disjoint union of open discs. Each of these open discs is called a \emph{face} of $\gr$. If $v \in \V$, $e \in \E$ and $f$ is a face of $\gr$, we call any pair of these \emph{incident} if one of the pair is contained in the closure of the other.

A \emph{cellular homeomorphism} of $\M$ is a homeomorphism of $\M$ that induces a graph automorphism of $\gr$. An \emph{automorphism} of $\M$ is an equivalence class of cellular homeomorphisms under the equivalence relation of isotopy. We denote the set of automorphisms of $\M$ by $\aut[\M]$, and we write $\autp[\M]$ for the set of orientation-preserving automorphisms of $\M$. We call $\M$ \emph{chiral} if $\autp[\M] = \aut[\M]$ and we call $\M$ \emph{reflexive} otherwise. Below we will work refer to reflexive maps with the symbol $\R$ to stress that this property is assumed.

A map $\M$ is called \emph{regular} if for all directed $\overrightarrow{e},\overrightarrow{e\lowprime} \in \E$ there is an orientation-preserving automorphism of $\M$ mapping $\overrightarrow{e}$ to $\overrightarrow{e\lowprime}$.

\end{de}

\begin{ex} The platonic solids correspond to regular maps, simply by interpreting the traditional terms `vertices' and `faces' according to our definition above, and defining the embedded graph by rereading the term `edges'. We denote the regular map corresponding to the tetrahedron by $\tet$.
\end{ex}

\begin{no} If $\M$ is a map, we denote by $\Sigma(\M)$, $\gr(\M)$, $\V(\M)$ and $\E(\M)$ the corresponding surface, graph, vertex set and edge set. The set of faces of $\M$ is denoted $\F(\M)$.
\end{no}

Note that any cellular homeomorphism is uniquely determined up to cellular isotopy by the graph isomorphism it induces (although in general, not all graph isomorphisms are representable by a cellular homeomorphism).

\begin{lem}~\label{lem:autdefbyedge} Let $\M$ be a map. Suppose $\phi \in \autp[\M]$ fixes some directed edge $\overrightarrow{e}$. Then $\phi = \id$.
\end{lem}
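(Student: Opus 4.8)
The plan is to pass to the graph automorphism $\bar\phi$ of $\gr(\M)$ induced by a cellular homeomorphism $h$ representing $\phi$. By the remark preceding the lemma, a cellular homeomorphism is determined up to isotopy by the graph automorphism it induces; since the identity homeomorphism induces the identity automorphism of $\gr(\M)$, it suffices to show that $\bar\phi$ fixes every vertex of $\gr(\M)$ and every edge individually, i.e.\ that $\bar\phi$ is the identity. Note that $h$ genuinely fixes the point $v$, the tail of $\overrightarrow{e}$: since $h$ sends vertices to vertices and induces $\bar\phi$, and $\bar\phi$ fixes the directed edge $\overrightarrow{e}$ and hence $v$, we have $h(v)=v$.

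The only substantial point is a local argument at a fixed vertex. Suppose $\bar\phi$ fixes a vertex $w$ and fixes at least one of the edge-ends (half-edges) at $w$. Choose a small open disc $D\subseteq\Sigma(\M)$ around $w$ whose intersection with $\gr(\M)$ consists exactly of the initial segments of the edges incident to $w$; these edge-ends, together with the sectors of $D\setminus\gr(\M)$ between consecutive ones, form a finite set carrying a cyclic order induced by the orientation of $\Sigma(\M)$. Since $h$ is orientation-preserving and fixes $w$, it acts on this set preserving the cyclic order and the partition into edge-ends and sectors, so it acts on the edge-ends at $w$ as a rotation of a finite cyclic set; as it fixes one of them, it fixes them all. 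Hence $\bar\phi$ fixes every edge incident to $w$ together with both of its edge-ends. (When $w$ carries a loop its two edge-ends are both at $w$, and the above shows both are fixed; in every case we conclude that for each edge $e'$ incident to $w$ the other edge-end of $e'$ is fixed by $\bar\phi$, hence so is the vertex carrying it.)

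Now I would induct on the graph distance $d$ from $v$, proving that $\bar\phi$ fixes every vertex at distance $\le d$ from $v$ and every edge incident to such a vertex. For $d=0$ this is the local argument applied at $w=v$, using that $\bar\phi$ fixes the edge-end of $e$ at $v$. For the inductive step, a vertex $w$ at distance $d+1$ is joined by an edge $e''$ to a vertex $w'$ at distance $d$; by the inductive hypothesis $\bar\phi$ fixes $e''$ and both its edge-ends, so it fixes $w$ and fixes the edge-end of $e''$ at $w$, and the local argument at $w$ completes the step. Since $\gr(\M)$ is finite and connected, $\bar\phi$ fixes all vertices and all edges, so $\bar\phi=\id$; by the reduction above, $h$ is isotopic to the identity, i.e.\ $\phi=\id$.

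The main obstacle — really the heart of the matter — is this local rotation argument: making rigorous that an orientation-preserving homeomorphism fixing $w$ permutes the cyclically ordered edge-ends at $w$ by a rotation, and checking that the degenerate configurations (loops at $w$, or a one-vertex graph) are handled so that fixing a single edge-end at $v$ indeed cascades to all of $\gr(\M)$. Everything else is bookkeeping with the cyclic order and an induction on distance using connectedness of $\gr(\M)$.
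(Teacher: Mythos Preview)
Your proof is correct and follows essentially the same strategy as the paper's: a local rotation argument at a fixed vertex (orientation-preserving plus one fixed half-edge forces all edge-ends there to be fixed), then propagation to the whole graph via connectedness, and finally the reduction from graph automorphism to cellular isotopy class. The paper's version is terser and does not spell out the half-edge formalism or the loop case, but the content is the same.
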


\begin{proof} Suppose $e'$ is an edge such that $e$ and $e'$ share a common vertex. Let $v$ be a vertex incident to both $e$ and $e'$ and consider a local picture around $v$. The only way for $\phi$ to fix $\overrightarrow{e}$ and to preserve orientation is to act as the identity locally. In particular, this means it must fix $\overrightarrow{e\lowprime}$ as well. Using connectedness of $\gr(\M)$, it follows that $\phi$ fixes all directed edges and hence is the identity as a graph isomorphism. This means $\phi = \id$.
\end{proof}

\noindent
As a direct corollary, any orientation-preserving automorphism of a map is uniquely determined by the image of a single directed edge. Similarly, any automorphism of a map is uniquely determined by the combination of its orientation and the image of a single directed edge.

\begin{no} Let $\M$ be a regular map. Let $f$ be a face of $\M$ with counterclockwise orientation. The boundary of $f$ is a union of edges, and the number of edges (where an edge is counted with multiplicity $2$ if it borders on the same face twice) in such a union does not depend on the face because $\M$ is regular. We always use the letter $p$ to denote this number. Let $e_1,e_2,\ldots,e_p$ be the edges on the boundary of $f$ (possibly containing doubles) in counterclockwise order. Orient these edges in a way compatible with the orientation of $f$. There is a unique orientation-preserving automorphism of $\M$ mapping $\overrightarrow{e_1}$ to $\overrightarrow{e_2}$. By necessity, it fixes $f$ and maps $\overrightarrow{e_i}$ to $\overrightarrow{e_{i+1}}$, taking indices modulo $p$ if necessary. Effectively, it can be seen as a rotation around $f$. We denote it by $\rotf_f$. Clearly, it has order $p$. Observe that any orientation-preserving automorphism of $\M$ fixing $f$ is a power of $\rotf_f$.

Similarly, given a vertex $v$ of $\M$, the number of edges incident to $v$ does not depend on the vertex. We always use the letter $q$ to denote this number. Let $e_1,e_2,\ldots,e_q$ be the edges incident to $v$ (possibly containing doubles) in counterclockwise order. Orient these edges locally as arrows departing from $v$. There is a unique orientation-preserving automorphism of $\M$ mapping $\overrightarrow{e_1}$ to $\overrightarrow{e_2}$. It fixes $v$ and also maps $e_i$ to $e_{i+1}$, so it can be seen as a rotation around $v$. We denote it by $\rotv_v$. Clearly, it has order $q$. Observe that any orientation-preserving automorphism of $\M$ fixing $v$ is a power of $\rotv_v$.
\end{no}

Let $\M$ be a reflexive regular map, and let $e$ be an edge of $\M$. We show that there are two orientation-reversing automorphisms that fix $e$ as a non-directed edge. We will call these reflections.

Let $\sigma \in \aut[\M]\setminus\autp[\M]$. There is an automorphism $g \in \autp[\M]$ such that $g(\sigma(\overrightarrow{e})) = \overrightarrow{e}$ by definition of a regular map. Moreover, there is $h \in \autp[\M]$ that reverses $e$ as a directed edge. The automorphisms $g\sigma$ and $hg\sigma$ do not preserve orientation and are two distinct automorphisms that fix $e$ as a non-directed edge. The first one fixes $\overrightarrow{e}$ as a directed edge. This shows existence. For uniqueness, if $\sigma,\sigma'$ do not preserve orientation and if both either fix $\overrightarrow{e}$ or reverse it, then $\sigma^{-1}\sigma'$ is orientation-preserving and fixes $\overrightarrow{e}$, and hence is $\id$ by Lemma~\ref{lem:autdefbyedge}.

A visualization of rotations and reflections can be seen in Figure~\ref{Fig:Basic_rotations}.

\begin{figure}[h]
\includegraphics[width=12cm]{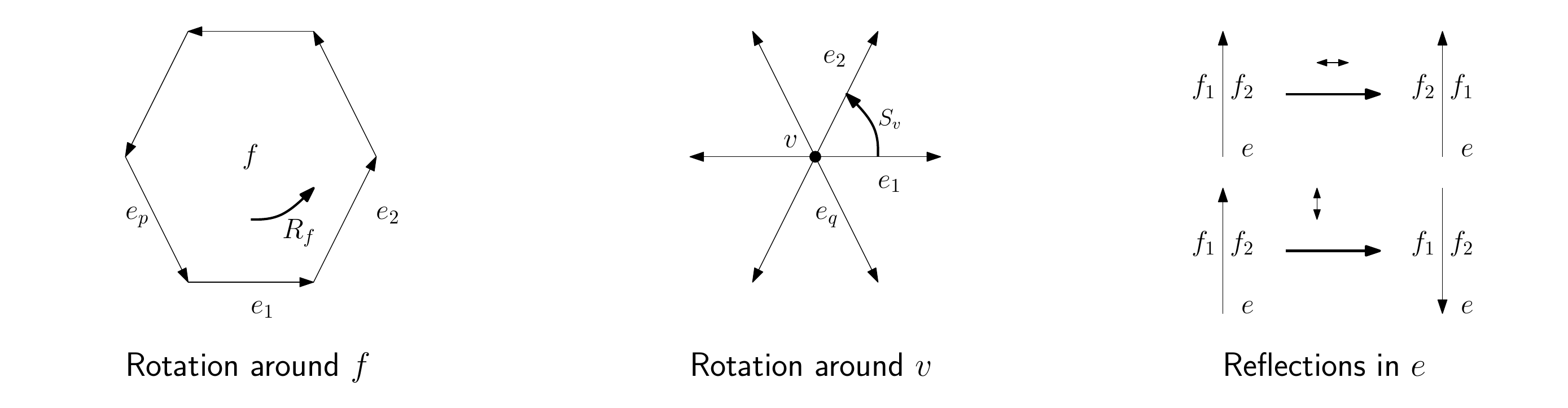}
\caption{Rotations and reflections. An indication of the action is given by curved arrows in the left and middle figure (rotations), and by the two-way arrows in the right figure (reflection).
}\label{Fig:Basic_rotations}
\end{figure}

As an example, suppose $v$ and $w$ are adjacent vertices, and after fixing an orientation, let $f$ be the face to the left of the oriented edge $\overrightarrow{(v,w)}$. The rotation $\rotv_v$ around $v$ maps $\overrightarrow{(v,w)}$ to $\overrightarrow{(v,\rotv_v(w))}$, and the rotation $\rotf_f$ around $f$ maps the latter to $\overrightarrow{(w,v)}$. This means the automorphism $\rotf_f\circ \rotv_v$ inverts the oriented edge $\overrightarrow{(v,w)}$. As a consequence, it has order $2$. In particular, for any regular map $\M$, if $f$ is a face adjacent to a vertex $v$, then $(\rotf_f\circ \rotv_v)^2 = \id$.

\begin{lem} Let $\M$ be a regular map, let $v$ be a vertex of $\M$, and let $f$ be a face of $\M$ incident to $v$. Then $\rotv_v$ and $\rotf_f$ generate $\autp[\M]$.
\end{lem}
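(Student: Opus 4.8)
The plan is to show that $H := \langle \rotv_v,\rotf_f\rangle$ already acts transitively on the set $D$ of directed edges of $\M$, and then to upgrade this to $H=\autp[\M]$. Fix any directed edge $\overrightarrow e$. By Lemma~\ref{lem:autdefbyedge} the map $\phi\mapsto\phi(\overrightarrow e)$ from $\autp[\M]$ to $D$ is injective, and by regularity of $\M$ it is surjective. So once $H$ is seen to act transitively on $D$, every $\phi\in\autp[\M]$ agrees on $\overrightarrow e$ with some element of $H$, hence equals it by injectivity; thus $H=\autp[\M]$.

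To prove transitivity of $H$, I would fix a directed edge $\overrightarrow{e_0}$ whose tail is $v$ and whose left-hand face is $f$ (possible since $v$ is incident to $f$), and examine the orbit $D_0:=H\cdot\overrightarrow{e_0}$. The key tool is the conjugation identity: for $\alpha\in\autp[\M]$ one has $\alpha\rotv_u\alpha^{-1}=\rotv_{\alpha(u)}$ for every vertex $u$ and $\alpha\rotf_g\alpha^{-1}=\rotf_{\alpha(g)}$ for every face $g$. This follows from the defining property of the rotations and the uniqueness in Lemma~\ref{lem:autdefbyedge}, since an orientation-preserving automorphism preserves the counterclockwise cyclic orders of edges around each vertex and around each face. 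Its consequence for us: whenever $\overrightarrow d=\alpha(\overrightarrow{e_0})\in D_0$ with $\alpha\in H$, the rotation around the tail of $\overrightarrow d$ equals $\alpha\rotv_v\alpha^{-1}\in H$, and the rotation around the left-hand face of $\overrightarrow d$ equals $\alpha\rotf_f\alpha^{-1}\in H$.

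From this I would deduce two closure properties of $D_0$. First, $D_0$ is stable under replacing a directed edge by any directed edge with the same tail: if $\overrightarrow d\in D_0$ has tail $t$, the powers of $\rotv_t\in H$ carry $\overrightarrow d$ to all directed edges out of $t$, and these remain in the $H$-invariant set $D_0$. Second, $D_0$ is stable under reversal: if $\overrightarrow d\in D_0$ has tail $t$ and left-hand face $\ell$, then $\rotf_\ell\rotv_t\in H$, and this element reverses $\overrightarrow d$ — this is exactly the order-$2$ computation performed just before the statement of the lemma, applied with $\ell$ the face lying to the left of $\overrightarrow d$. Combining the two: if $\overrightarrow{(x,y)}\in D_0$, then also $\overrightarrow{(y,x)}\in D_0$, and hence $\overrightarrow{(y,z)}\in D_0$ for every neighbour $z$ of $y$. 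Starting from $\overrightarrow{e_0}$ and inducting on the distance in $\gr(\M)$ from $v$ — here I use connectedness of $\gr(\M)$ — yields $\overrightarrow{(x,y)}\in D_0$ for every edge $\{x,y\}$ of $\gr(\M)$ in both orientations, i.e.\ $D_0=D$, which is the desired transitivity.

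The orbit-and-injectivity bookkeeping is routine; the step demanding the most care is the conjugation identity and, hand in hand with it, correctly tracking which face is ``to the left'' of a directed edge when invoking the reversal identity as one moves along the graph. An orientation slip there — using the face on the wrong side — would invalidate the reversal step and hence the induction. All the remaining ingredients are immediate from what has already been set up in this section.
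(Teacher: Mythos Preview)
Your proof is correct and follows precisely the approach the paper sketches (and defers to \cite{hendriks2013} for): establish that $\langle\rotv_v,\rotf_f\rangle$ acts transitively on directed edges, then invoke Lemma~\ref{lem:autdefbyedge} together with regularity to conclude the subgroup is all of $\autp[\M]$. Your conjugation-and-closure argument is a clean way to carry out that transitivity step, and your caution about tracking the left-hand face under orientation-preserving maps is well placed.
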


This lemma can be proved by showing that $\rotv_v$ and $\rotf_f$ can map a given directed edge to any other directed edge. A proof can be found in \cite[Lemma 1.1.6]{hendriks2013}. For convenience, when we have $v \in \V(\M)$ and $f$ a face of $\M$ incident to $v$, we occasionally use $\rotv$ and $\rotf$ instead of $\rotv_v$ and $\rotf_f$. An important consequence of the lemma is that we can pin down a regular map $\M$ without explicitly talking about its topology. We can give a presentation of $\autp[\M]$ in the generator pair $(\rotf,\rotv)$. We call this a \emph{standard map presentation} of $\M$.


\begin{ex} We have $|\autp[\tet]| = 12$, the cardinality of the set of directed edges of $\tet$. Both the number of edges incident to a vertex and the number of edges incident to a face are $3$. This means $q = 3$ and $p = 3$. Let $v \in \V(\tet)$ and $f$ a face incident to $v$. We find the obvious relations $\rotv^3 = \id$ and $\rotf^3 = \id$. Moreover, we have the relation $(\rotv\rotf)^2 = 1$, from the fact that $\rotv\rotf$ reverses the edge $(v,\rotf^{-1}(v))$. It turns these are the only relators necessary to define a standard map presentation: $\autp[\tet] = \langle \rotf, \rotv \mid \rotf^3, \rotv^3, (\rotf\rotv)^2 \rangle$.
\end{ex}

\begin{re}
A regular map is completely determined by a standard map presentation. Knowing just the isomorphism type of $\autp[\M]$ is insufficient, however. For example, the regular maps $\Rcond{3.1}$ and $\Rcond{10.9}$ both have $\autp[\M] \cong \textrm{PSL}(2,7)$. Even fixing the triple $(\autp[\M],p,q)$, and thereby also the genus, does not necessarily determine a unique regular map. The counterexamples, \emph{tuplets}, are of special interest (a different story altogether). The first examples occur in genus $8$ (the twins $\Rcond{8.1}$ and $\Rcond{8.2}$) and genus $14$ (the first Hurwitz triplet, $\Rcond{14.1}$, $\Rcond{14.2}$, $\Rcond{14.3}$). The subscripts indicate the precise maps as listed on \cite{conderweb}.
\end{re}

A small theory of $\autp[\M]$-equivariant cellular morphisms between regular maps can be developed, as described in \cite[Section 1.6]{hendriks2013}. The notion happily coincides with taking certain quotients of $\autp[\M]$, and a result we will use later in Lemma~\ref{j=q->q=2} and Proposition~\ref{dens2/3} is the following.
\vspace{\baselineskip}

\begin{prop}~\label{prop:modding}
Suppose $\M$ is a regular map and $H$ a normal subgroup of $\aut[\M]$ that is contained in $\autp[\M]$ and does not contain an automorphism that reverses some edge of $\M$. Then $\aut[\M]/H$ is the automorphism group of a regular map $\overline{\M}$ satisfying $\gr(\overline{\M}) = \gr(\M)/H$ and $\F(\overline{\M}) = \F(\M)/H$. There is a branched cellular covering $\M \to \overline{\M}$ with the fiber of a cell of $\overline{\M}$ a coset of $H$. Each cell of $\M$ contains at most one ramification point, and each cell of $\overline{\M}$ at most one branch point. These numbers only depend on the dimension of the cell.
\end{prop}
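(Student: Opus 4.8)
The plan is to build $\overline{\M}$ group-theoretically from a standard map presentation of $\M$ and then realise the asserted covering topologically. First I would set $G=\autp[\M]$, fix a directed edge $\overrightarrow{e_0}$ with tail $v_0$ and an incident face $f_0$, and invoke the standard dictionary between regular maps and groups with distinguished generators (cf.\ the standard map presentations above and \cite[Section~1.6]{hendriks2013}): $G$ acts simply transitively on directed edges, so directed edges are identified with the elements of $G$; the vertices, edges and faces of $\M$ correspond to the left cosets $G/\langle\rotv_{v_0}\rangle$, $G/\langle T\rangle$, $G/\langle\rotf_{f_0}\rangle$, where $T=\rotf_{f_0}\rotv_{v_0}$ is the order-$2$ automorphism reversing $\overrightarrow{e_0}$; and two cells are incident exactly when the corresponding cosets meet.

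Next I would pass to the quotient. Because $G$ acts transitively on edges, the elements of $G$ that reverse some edge are precisely the conjugates of $T$; and since $H\trianglelefteq\aut[\M]$ gives $H\trianglelefteq G$, the hypothesis that $H$ contains no edge-reverser becomes simply $T\notin H$. Setting $\bar G=G/H$ with generators $\bar\rotf=\rotf_{f_0}H$ and $\bar\rotv=\rotv_{v_0}H$, one then has $\bar\rotf\bar\rotv=TH$ of order exactly $2$ (as $T\notin H$ but $T^2=\id$), so $(\bar G,\bar\rotf,\bar\rotv)$ is a standard map presentation and determines a regular map $\overline{\M}$. Using normality of $H$ in $G$ (so $H\langle\rotv_{v_0}\rangle=\langle\rotv_{v_0}\rangle H$, and likewise for the other subgroups) I would check that the vertices of $\overline{\M}$, namely the cosets $G/(\langle\rotv_{v_0}\rangle H)$, are exactly the $H$-orbits of vertices of $\M$ — the orbit of $g\langle\rotv_{v_0}\rangle$ being $gH\langle\rotv_{v_0}\rangle$ — and similarly for edges and faces, with incidence preserved; this yields $\gr(\overline{\M})=\gr(\M)/H$ and $\F(\overline{\M})=\F(\M)/H$. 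Normality of $H$ in the full group $\aut[\M]$ (rather than merely in $G$) is what lets the orientation-reversing automorphisms descend as well, identifying $\aut[\M]/H$ with $\aut[\overline{\M}]$.

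To obtain the branched covering, I would realise $\aut[\M]$ as an honest group of homeomorphisms of $\Sigma(\M)$ — for instance through its simplicial action on the flag triangulation of $\M$, or by fixing a conformal structure for which it acts by isometries — with $H$ acting by orientation-preserving maps. Let $\pi\colon\Sigma(\M)\to\Sigma(\M)/H$ be the quotient; this is again a closed orientable surface, $\pi(\gr(\M))$ is an embedded graph, and its complementary regions are the images of the faces of $\M$, each an open disc (the quotient of an open disc by a finite cyclic rotation group is again an open disc). So $\pi(\gr(\M))$ is a map on whose directed edges $\bar G$ acts simply transitively with rotations $\bar\rotv,\bar\rotf$, hence, a regular map being determined by a standard map presentation, it coincides with the $\overline{\M}$ built above. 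For the branching: $H\subseteq G$ acts freely on directed edges, so a nontrivial $h\in H$ fixes no point of an open edge (such a point would be fixed together with its edge, and then $h$ would either fix that directed edge — forcing $h=\id$ by Lemma~\ref{lem:autdefbyedge} — or reverse it, which is excluded); running the same argument with an arbitrary cell in place of an edge, and using that a nontrivial finite-order orientation-preserving self-homeomorphism of a disc fixes only its centre, the fixed locus of a nontrivial $h\in H$ consists of cell centres. Hence $\pi$ is an unramified covering of degree $|H|$ away from cell centres: a branched cellular covering, with ramification only at centres of cells of $\M$ and branch points only at centres of cells of $\overline{\M}$, at most one per cell — occurring exactly when $H_c:=H\cap\operatorname{Stab}_{\aut[\M]}(c)$ is nontrivial, with ramification index $|H_c|$ (and $H_e$ is trivial for every edge $e$, the only orientation-preserving nonidentity element fixing $e$ being an edge-reverser, so edges are unramified). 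Transitivity of $\aut[\M]$ on the cells of each fixed dimension together with $H\trianglelefteq\aut[\M]$ makes the subgroups $H_c$ attached to cells of a given dimension all conjugate, so $|H_c|$ and the ramification count in $c$ depend only on $\dim c$; and $H\trianglelefteq G$ makes the cells of $\M$ lying over a fixed cell of $\overline{\M}$ a single $H$-orbit, i.e.\ the fibre over it is a coset of $H$.

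I expect the main obstacle to be the passage from the isotopy-class group $\aut[\M]$ to a genuine action on $\Sigma(\M)$ and the matching of the topological quotient $\Sigma(\M)/H$ (equipped with the image graph) to the combinatorially defined $\overline{\M}$ — in particular checking that no faces merge or disappear beyond what the coset bookkeeping predicts. Once that is in place, the one geometric fact that does the work — that the fixed points of elements of $H$ are exactly the cell centres — is precisely where the no-edge-reverser hypothesis is consumed, and the remainder is the equivariant-morphism bookkeeping developed in \cite{hendriks2013}.
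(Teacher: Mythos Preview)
The paper does not prove this proposition in the text; it is stated as a result imported from \cite[Section~1.6]{hendriks2013}, with no argument given. Your sketch is correct and is the standard route --- build $\overline{\M}$ combinatorially from the generator pair $(\bar\rotf,\bar\rotv)$ in $G/H$, then realise the quotient topologically via an honest action and read off the branching from fixed points --- and it is almost certainly what the cited reference does, since that is precisely the equivariant-morphism framework the paper points to.

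One remark on presentation: the step ``a nontrivial finite-order orientation-preserving self-homeomorphism of a disc fixes only its centre'' is true, but as stated it invokes a nontrivial classical theorem (Ker\'ekj\'art\'o--Brouwer--Eilenberg on periodic disc homeomorphisms). You do not actually need it in that strength: in the simplicial or conformal model you already set up, the stabiliser in $H$ of a face $f$ lies in $\langle\rotf_f\rangle$ and therefore acts as an honest rotation about the face centre, and similarly for vertices. Phrasing the fixed-locus argument that way keeps everything elementary and internal to the regular-map structure.
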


\noindent
Here, by $\gr(\M)/H$ we mean the graph obtained by identifying vertices respectively edges of $\gr(\M)$ if they lie in the same $H$-orbit, and by $\F(\M)/H$ we mean the set obtained by identifying faces of $\F(\M)$ if they lie in the same $H$-orbit.

%

\begin{ex}[Fermat maps] For $n \in \mathbb{Z}_{>0}$, let
$$
G_n = \langle\rotf,\rotv \mid \rotf^3, \rotv^{2n}, (\rotf\rotv)^2, [\rotf,\rotv]^3 \rangle .
$$
For each $n$, the group $G_n$ is the group of orientation-preserving automorphisms of a regular map that we call the Fermat map $\fer(n)$, obtained by considering the solutions of $x^n+y^n+z^n = 0$, acted upon by its algebraic automorphism group. We omit the proof for this claim, but we do note that $\fer(n)$ is a reflexive regular map, and that this is a remarkable property. The group structure can be described as $G_n \cong \ZZ_n^2 \rtimes \Sym_3$. The graph $\gr(\fer(n))$ is a simple graph, has $3n$ vertices and is of genus ${n-1 \choose 2}$. Each of the faces of $\fer(n)$ is a triangle, since $\rotf$ has order $3$.
\begin{figure}[ht!]
\centering
\raisebox{3mm}{
\begin{tikzpicture}[scale=0.5]
\def\R{2.5} 
\def\angEl{15} 
\tikzset{
    partial ellipse/.style args={#1:#2:#3}{
        insert path={+ (#1:#3) arc (#1:#2:#3)}
    }
}
\filldraw[ball color=white] (0,0) circle (\R);
\draw[thick, dashed] (0,0) [partial ellipse=0:180:25mm and 6mm];
\draw[thick] (0,0) [partial ellipse=180:360:25mm and 6mm];
\foreach \t in {85, 205, 325} {
\LongitudePlane[pzplane]{\angEl}{\t};
 \path[pzplane] (0:\R) coordinate[mark coordinate, scale=1.5] ();
}
\end{tikzpicture}
}
\includegraphics[width=0.23\textwidth]{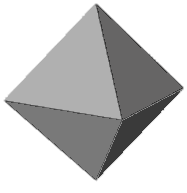}
\raisebox{5mm}{\includegraphics[width=0.24\textwidth]{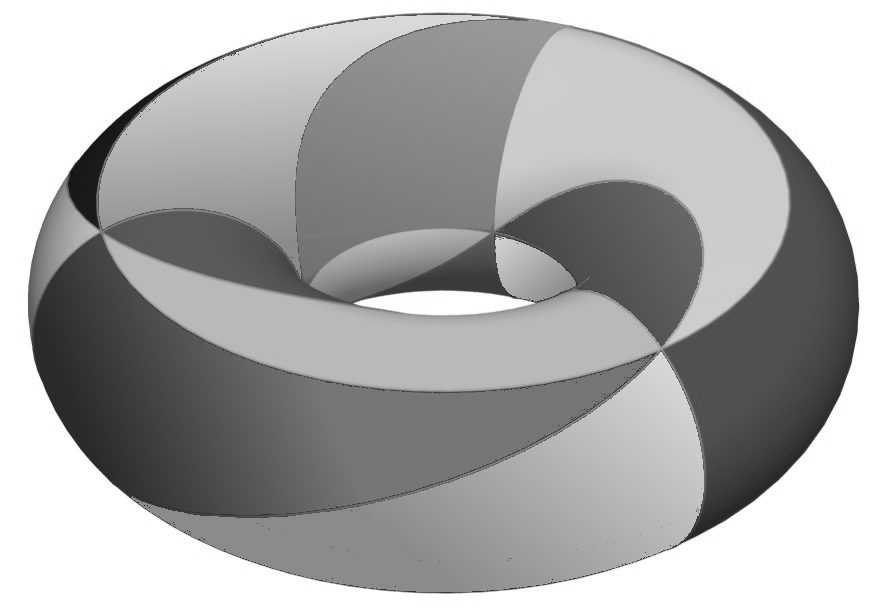}}
\includegraphics[width=0.26\textwidth]{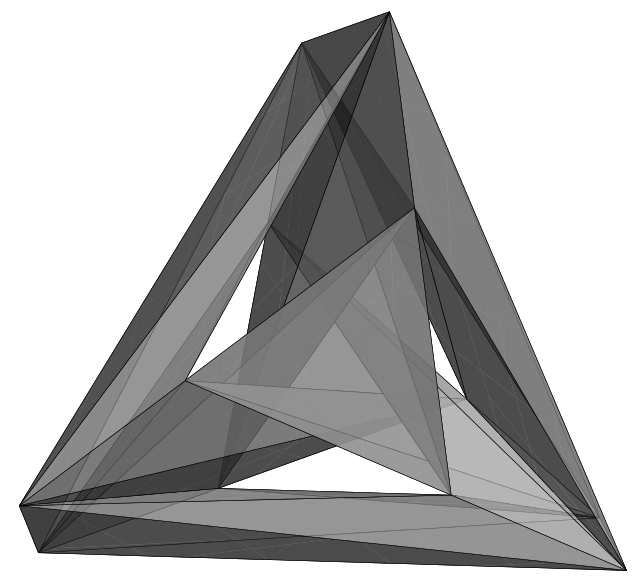}
\caption{The first Fermat maps ($n=1,2,3,4$). The visualisation of $\fer(4)$ was constructed in 1987 by Ulrich Brehm \cite{Bre87}.}\label{fig:fermatmaps}
\end{figure}

\end{ex}

\begin{de} Let $\M$ be a regular map and let $v,v' \in \V(\M)$. Then the \emph{distance} between $v$ and $v'$ is the minimal length of a path in $\gr(\M)$ from $v$ to $v'$; it is denoted by $\dist{v'}{v}$. The set of vertices at distance at most $i$ from $v$ is denoted $\D{i}{v}$. The set of vertices at distance precisely $i$ from $v$ is denoted by $\dD{i}{v}$.

The \emph{density} $\den(\M)$ of $\M$, which is the central notion for the rest of this paper, is defined as
$$
\den(\M) := \frac{|\dD{1}{v}|}{|\V(\M)|} .
$$
\end{de}

\noindent
Note that $\den(\M)$ does not depend on the choice of $v$ by the regularity of the map.

\begin{ex} The Fermat maps all have density $\frac{2}{3}$. The tetrahedron $\tet$ has density $\frac{3}{4}$.
\end{ex}

\noindent
Our main objective from here on will be to show that the Fermat maps and the tetrahedron are the only reflexive regular maps with simple graph of density strictly exceeding $\frac{1}{2}$, as announced in our \emph{regular map density theorem} \ref{thm:main}.


\section{The Regular Map Density Theorem}\label{sec:mainthm}
Let us write $\V = \V(\R)$ for convenience. We start with a rather technical but crucial lemma:

\begin{lem}\label{regularity} Let $v,v' \in \V$. Suppose we have $j \in \ZZ$ such that $\rotv_{v}^j$ fixes $v'$. Then the following claims hold.
\begin{description}
\item[1] There is $k \in \ZZ$ such that $\rotv_{v}^j = \rotv_{v'}^{kj}$. Moreover, $k$ is well-defined and invertible modulo $\frac{q}{\gcd(j,q)}$.
\item[2] Let $\elt \in \aut[\R]$. Suppose $\rotv_{v}^j = \rotv_{v'}^{kj}$. Then $\rotv_{\elt(v)}^j = \rotv_{\elt(v')}^{kj}$. Moreover, $\rotv_{v}^j$ fixes $\elt(v)$ if and only if it fixes $\elt(v')$ and in this case, if $\rotv_{v}^j = \rotv_{\elt(v)}^{l}$, then $\rotv_{v}^j = \rotv_{\elt(v')}^{kl}$.
\item[3] Let $k \in \ZZ$ such that $\rotv_{v}^j = \rotv_{v'}^{kj}$. For all $i \in \ZZ$, we have $\rotv_{v}^j = \rotv_{\rotv_{v}^i(v')}^{kj}$ and $\rotv_{v}^j = \rotv_{\rotv_{v'}^i(v)}^j$.
\item[4] Let $\elt_1,\ldots,\elt_n \in \aut[\R]$ and suppose there is $v_i \in \V$ such that $\rotv_{v}^j$ fixes both $v_i$ and $\elt_i(v_i)$ for all $i \in \{1,2,\ldots,n\}$. Then $\rotv_{v}^j$ fixes $\elt_n\elt_{n-1}\ldots\elt_1(v')$.
\end{description}
\end{lem}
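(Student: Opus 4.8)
The four parts are designed to build on one another, so I would establish them in order, with part 4 following formally once parts 1--3 are in hand. Two facts do the heavy lifting. First (already noted in the Notation block): an orientation-preserving automorphism fixing a vertex $u$ is a power of $\rotv_u$, and since $\rotv_u$ has order $q$, the element $\rotv_u^a$ has order $q/\gcd(a,q)$. Second, the conjugation rule $\elt\,\rotv_u\,\elt^{-1}=\rotv_{\elt(u)}^{\varepsilon(\elt)}$ for $\elt\in\aut[\R]$, where $\varepsilon(\elt)=1$ if $\elt$ preserves orientation and $\varepsilon(\elt)=-1$ otherwise: indeed $\elt\,\rotv_u\,\elt^{-1}$ is orientation-preserving and fixes $\elt(u)$, and it sends the two consecutive directed edges at $u$ defining $\rotv_u$ to the two consecutive directed edges at $\elt(u)$, in the same rotational sense if $\varepsilon(\elt)=1$ and the opposite sense if $\varepsilon(\elt)=-1$, so it equals $\rotv_{\elt(u)}^{\varepsilon(\elt)}$.

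For \textbf{part 1}: since $\rotv_v^j$ is orientation-preserving and fixes $v'$, it equals $\rotv_{v'}^m$ for some $m$; equality of orders forces $\gcd(m,q)=\gcd(j,q)=:d$. Dividing the congruence $kj\equiv m\pmod q$ through by $d$ gives $k\cdot(j/d)\equiv(m/d)\pmod{q/d}$ with $\gcd(j/d,q/d)=1$, hence $k$ exists and is unique modulo $q/d=q/\gcd(j,q)$; it is a unit there because $m/d$ is, as $\gcd(m,q)=d$. For \textbf{part 2}: conjugating $\rotv_v^j=\rotv_{v'}^{kj}$ by $\elt$ and using the conjugation rule gives $\rotv_{\elt(v)}^{\varepsilon j}=\rotv_{\elt(v')}^{\varepsilon kj}$, and raising both sides to the power $\varepsilon=\pm1$ yields $\rotv_{\elt(v)}^{j}=\rotv_{\elt(v')}^{kj}$. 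For the equivalence of fixing: if $\rotv_v^j$ fixes $\elt(v)$, write $\rotv_v^j=\rotv_{\elt(v)}^{l}$ with $\gcd(l,q)=d$; then, as in part 1, $l\equiv jt\pmod q$ for some $t$ that is a unit modulo $q/d$, so $\rotv_v^j=(\rotv_{\elt(v)}^{j})^{t}=(\rotv_{\elt(v')}^{kj})^{t}=\rotv_{\elt(v')}^{kjt}=\rotv_{\elt(v')}^{kl}$, which both proves that $\rotv_v^j$ fixes $\elt(v')$ and gives the final displayed assertion; the reverse implication is symmetric, using that $k$ is a unit modulo $q/d$ to re-express $\rotv_{\elt(v')}^{j}$ as a power of $\rotv_{\elt(v)}$. \textbf{Part 3} is then immediate: apply part 2 with $\elt=\rotv_v^i$ (which fixes $v$) for the first identity, and with $\elt=\rotv_{v'}^i$ (which fixes $v'$) for the second.

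For \textbf{part 4} I would first extract from parts 1--2 the following self-contained statement: if an automorphism $h$ that is a power of some vertex-rotation fixes two vertices $a,b$, and $\elt\in\aut[\R]$ is such that $h$ fixes $\elt(a)$, then $h$ fixes $\elt(b)$. To see this, write $h=\rotv_a^{l}$ (possible since $h$ fixes $a$ and is orientation-preserving), and apply the lemma's own parts 1 and 2 with the substitution $v\mapsto a$, $j\mapsto l$, $v'\mapsto b$ --- legitimate because $\rotv_a^{l}=h$ fixes $b$ --- which produces $k'$ with $h=\rotv_b^{k'l}$ and then tells us precisely that $h$ fixes $\elt(a)$ if and only if $h$ fixes $\elt(b)$. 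Now part 4 follows by induction on $n$: for $n=0$ the empty product is the identity and the claim is the standing hypothesis that $\rotv_v^j$ fixes $v'$; for the step, put $w=\elt_{n-1}\cdots\elt_1(v')$, which $\rotv_v^j$ fixes by the induction hypothesis, and apply the extracted statement with $h=\rotv_v^j$, $a=v_n$, $b=w$, $\elt=\elt_n$ (using that $\rotv_v^j$ fixes both $v_n$ and $\elt_n(v_n)$) to conclude that $\rotv_v^j$ fixes $\elt_n(w)=\elt_n\elt_{n-1}\cdots\elt_1(v')$.

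The only delicate point --- and the step I expect to need the most care --- is the modular bookkeeping in parts 1 and 2: keeping straight which exponents are considered modulo $q$ and which modulo $q/\gcd(j,q)$, and verifying at each stage that the residues in question are units so that one may legitimately invert them and pass powers back and forth. Once that is handled, parts 3 and 4 are purely formal.
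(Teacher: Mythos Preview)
Your proposal is correct and follows essentially the same route as the paper: both arguments hinge on the two facts you isolate (stabilizers of vertices are cyclic generated by $\rotv_u$, and the conjugation rule $\elt\,\rotv_u\,\elt^{-1}=\rotv_{\elt(u)}^{\pm 1}$), handle the modular bookkeeping in part~1 identically, derive part~2 by conjugating and then tracking the exponent via a unit $t$ with $l\equiv tj\pmod q$, and obtain parts~3 and~4 as formal consequences. Your treatment of part~4 is organized slightly differently---you re-base the rotation at $a=v_n$ and invoke part~2 once, whereas the paper keeps the base at $v$ and invokes part~2 twice (with $v'$ and then with $v_1$)---but the content is the same, and if anything your write-up of the exponent manipulations in parts~1--2 is more explicit than the paper's.
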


\begin{proof} For Claim $\mathbf{1}$, note that any orientation-preserving element that fixes $v'$ is a rotation around $v'$, say $\rotv_{v}^j = \rotv_{v'}^i$. The order of these rotations must be equal, and in this case is equal to the smallest positive integer $x$ such that $xj \equiv 0\mod q$, which is $\frac{q}{\gcd(j,q)}$, using the fact that $(\rotv_{v}^j)^x = 1$ precisely if $xj$ is a multiple of $q$. Analogously, we observe that the order of these rotations must be equal to $\frac{q}{\gcd(i,q)}$, meaning $\gcd(i,q) = \gcd(j,q)$. In particular, both $i$ and $j$ are integer multiples of $\gcd(j,q)$, and moreover, these multiples must be invertible modulo $\frac{q}{\gcd(j,q)}$ (if not, the order of $i$ or $j$ modulo $q$ would be strictly smaller than $\frac{q}{\gcd(j,q)}$). Clearly, there exists $k$, well-defined and invertible modulo $\frac{q}{\gcd(j,q)}$, such that $jk \equiv i$ mod $q$. This shows Claim $\mathbf{1}$.

For Claim $\mathbf{2}$, we have the equalities $\rotv_{\elt(v)}^{j} = \elt\rotv_{v}^{\pm j}\elt^{-1} = \elt\rotv_{v'}^{\pm kj}\elt^{-1} = \rotv_{\elt(v')}^{kj}$, where the sign in the exponent corresponds to the orientation of $\elt$ being positive or negative. If $\rotv_{v}^j$ fixes $\elt(v)$ respectively $\elt(v')$, it is a power of $\rotv_{\elt(v)}^j$ respectively $\rotv_{\elt(v')}^j$ (which is itself a power of $\rotv_{\elt(v)}^j$). In either case, $\rotv_{v}^j$ fixes both $\elt(v)$ and $\elt(v')$. Moreover, if $\rotv_{v}^j = \rotv_{\elt(v)}^l$, we have $\rotv_{v}^j = \rotv_{\elt(v')}^{kl}$. This completes the proof of Claim $\mathbf{2}$.

The equality $\rotv_{v}^j = \rotv_{\rotv_{v}^i(v')}^{kj}$ is easily seen because of Claim $\mathbf{2}$, using $\elt = \rotv_{v}^i$. The equality $\rotv_{v}^j = \rotv_{\rotv_{v'}^i(v)}^j$ follows by symmetry. This shows Claim $\mathbf{3}$.

To prove Claim $\mathbf{4}$, note that by Claim $\mathbf{2}$, we have $\rotv_v^j$ fixes $\elt_1(v')$ if and only if it fixes $\elt_1(v)$. Exchanging the roles of $v'$ and $v_1$, we have $\rotv_v^j$ fixes $\elt_1(v)$ if and only if it fixes $\elt_1(v_1)$. The latter is true by assumption, so $\rotv_v^j$ fixes $\elt_1(v')$. Claim $\mathbf{4}$ now follows by induction.
\end{proof}

While we formulated the previous lemma rather technically, the statements should be seen in a more geometrical and intuitive way. Any rotation $\rotv_{v_0}^j$ that fixes a vertex $v_1$ must be a rotation around $v_1$ of the same order (Claim $1$). Moreover, such a situation translates well under conjugation (Claims $2$ and $3$). Finally, the set of fixed points of an orientation-preserving automorphism $\rotv_v^j$ is closed under the set of automorphisms $\elt'$ that map at least one fixed point of $\elt$ to another fixed point of $\elt$ (Claim $4$). We will use this lemma often.

From here on, we will work under the following assumption.
\begin{ass}
$\R$ is a reflexive regular map with a simple graph.
\end{ass}

An easy graph lemma gives us a point of departure to say something about regular maps with high density.

\begin{lem}\label{diamleq2} If $\gr(\R)$ has density $\den(\R) \geq \frac{1}{2}$, then $\gr(\R)$ has diameter at most $2$.
\end{lem}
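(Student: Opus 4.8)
The plan is to run a short double-counting argument on the closed balls of radius one. First I would unwind the density hypothesis. Since by our standing assumption $\gr(\R)$ is a simple graph, every vertex $v$ has exactly $q$ distinct neighbours, so $|\dD{1}{v}| = q$ and therefore $\den(\R) = q/|\V|$. The hypothesis $\den(\R) \geq \frac{1}{2}$ thus becomes the purely numerical inequality $2q \geq |\V|$. Moreover, again by simplicity (no loops), $\D{1}{v} = \{v\} \cup \dD{1}{v}$ has exactly $1 + q$ elements, a fact I would record for both $v$ and any other chosen vertex.

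Next I would argue by contradiction: suppose the diameter exceeds $2$, so that there exist $v, v' \in \V$ with $\dist{v'}{v} \geq 3$. The crucial observation is that $\D{1}{v}$ and $\D{1}{v'}$ must then be disjoint, since any common vertex $w$ would give $\dist{v'}{v} \leq \dist{w}{v} + \dist{v'}{w} \leq 1 + 1 = 2$ by the triangle inequality for the path metric, contradicting $\dist{v'}{v} \geq 3$. Hence $|\V| \geq |\D{1}{v}| + |\D{1}{v'}| = 2(1+q) \geq 2 + |\V|$, which is absurd. Therefore the diameter of $\gr(\R)$ is at most $2$.

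I do not expect any genuine obstacle here; the argument is entirely elementary. The only point requiring a little care is the very first step, namely translating $\den(\R) \geq \frac{1}{2}$ into the bound $2q \geq |\V|$ and asserting $|\D{1}{v}| = 1 + q$: both of these use that the graph is simple, whereas the graph of a general regular map may have loops or multiple edges and then $|\dD{1}{v}|$ could be strictly smaller than $q$. Since simplicity is part of the running assumption, this causes no difficulty.
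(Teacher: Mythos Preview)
Your argument is correct and is essentially the same pigeonhole/counting argument as the paper's: the paper phrases it by observing that if $\dist{v_1}{v_0}\geq 2$ then both $\dD{1}{v_0}$ and $\dD{1}{v_1}$ lie in $\V\setminus\{v_0,v_1\}$ and together have at least $|\V|$ elements, forcing (at least two) common neighbours, while you package the same count using the closed balls $\D{1}{v}$ under the assumption $\dist{v'}{v}\geq 3$. One minor remark: since $\den(\R)=|\dD{1}{v}|/|\V|$ by definition, the inequality $|\dD{1}{v}|\geq |\V|/2$ is immediate and you do not actually need to pass through the equality $|\dD{1}{v}|=q$ (and hence do not need simplicity at this particular step), though of course your version is perfectly valid under the standing assumption.
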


\begin{proof} Let $v_0,v_1 \in \V$. Suppose that $\dist{v_0}{v_1}\geq 2$. Then $\dD{1}{v_0}$ and $\dD{1}{v_1}$ are contained in $V - \{v_0,v_1\}$, which has cardinality $|V|-2$. Since we have $|\dD{1}{v_0}| + |\dD{1}{v_1}| \geq |V| > |V|-2$, we find that $v_0$ and $v_1$ share at least two common neighbors. In particular, this implies $\dist{v_1}{v_0} \leq 2$.
\end{proof}

In the following proposition, we will show that the faces of reflexive regular maps of high density are triangles ($p=3$).

\begin{prop}\label{Prop:p=3} Suppose $\den(\R) > \frac{1}{2}$. Then $p=3$.
\end{prop}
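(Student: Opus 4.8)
The plan is to argue by contradiction: suppose $p \geq 4$, and use the diameter bound from Lemma~\ref{diamleq2} together with the local rotational structure around a face to produce two vertices at distance $\geq 3$, or else force the graph to have multiple edges, contradicting the standing assumption that $\gr(\R)$ is simple. First I would fix a face $f$ with boundary edges $e_1, \ldots, e_p$ in counterclockwise order and the corresponding boundary vertices $v_1, \ldots, v_p$ (using simplicity of the graph and $p \geq 4$ to see these are genuinely $p$ distinct vertices forming a $p$-cycle, with no chords among ``consecutive'' ones forced). The rotation $\rotf_f$ cyclically permutes these vertices. The key geometric input is that the neighbors of $v_1$ are controlled: going around $v_1$ via $\rotv_{v_1}$, the two neighbors $v_2$ and $v_p$ are adjacent in the cyclic order of edges at $v_1$ (they are the two edges of $f$ at $v_1$), and all other neighbors of $v_1$ lie ``on the far side''.

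The crucial step: I would show that for $p \geq 4$, the vertex $v_1$ and the vertex $v_3$ (two steps along the face boundary) are at distance $\geq 3$ unless the graph fails to be simple or $\den(\R) \le \tfrac12$. Here is the mechanism I expect to work. Any common neighbor $w$ of $v_1$ and $v_3$ would, together with the path $v_1 v_2 v_3$, bound a region; one analyzes the faces incident to the edge $v_1 v_2$ and to $v_2 v_3$. Using regularity --- every directed edge looks like every other --- and the relation $(\rotf_f \rotv_v)^2 = \id$ established in the excerpt, one can pin down exactly which faces sit around $v_2$, and conclude that a common neighbor $w$ of $v_1$ and $v_3$ distinct from $v_2$ would force either a multiple edge or a face of size $< p$, contradicting regularity (all faces have size $p$). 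Hence $\dist{v_1}{v_3} \ge 3$ when $p \ge 4$ and $v_1, v_3$ are non-adjacent; and $v_1, v_3$ being adjacent would itself be a chord of the face boundary, which one rules out for a regular map with $p \ge 4$ by a similar face-counting/regularity argument (a chord would create a face of size $3 < p$). Either way we contradict Lemma~\ref{diamleq2}.

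I would then also need to dispose of $p = 1$ and $p = 2$: $p = 1$ would mean a face bounded by a single edge, forcing a loop, excluded by simplicity; $p = 2$ would mean two faces sharing both their edges, i.e.\ a multiple edge (or a very degenerate map), again excluded by simplicity — more carefully, $p=2$ forces $q$ to be small via the standard spherical/Euclidean constraints and one checks no simple graph arises, or one observes directly that a $2$-gon face gives a double edge. So $p \ge 3$, and combined with the argument above $p = 3$.

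The main obstacle I anticipate is the middle step: carefully controlling the local picture around $v_2$ to rule out a common neighbor of $v_1$ and $v_3$ — one must be sure one is not implicitly assuming $q$ is large or that the face boundary is embedded without identifications, and the cleanest route is probably to phrase everything in terms of the generators $\rotf$, $\rotv$ and the relation $(\rotf\rotv)^2 = \id$, translating ``$w$ is a common neighbor'' into an equation in $\autp[\R]$ and deriving that it forces $p = 3$. Getting that translation exactly right, including the edge cases where paths wrap around, is where the real work lies; the diameter bound and the bookkeeping with Lemma~\ref{regularity} are then routine.
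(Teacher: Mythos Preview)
Your strategy heads in exactly the wrong direction. You aim to show that for $p \ge 4$ the face-boundary vertices $v_1$ and $v_3 = \rotf_f^2(v_1)$ lie at distance $\ge 3$, contradicting the diameter bound. But under the hypothesis $\den(\R) > \tfrac12$ the opposite holds: $v_1$ and $v_3$ are \emph{adjacent}, and it is precisely this adjacency that one exploits to force $p=3$. Your proposed mechanism --- that a common neighbour $w \ne v_2$ of $v_1$ and $v_3$ would create a face of size $< p$ or a multiple edge --- is already false for the cube ($p=4$, $q=3$), where $v_4$ is such a $w$ with no pathology; and the claim that a chord $v_1 v_3$ ``creates a face of size $3$'' is simply wrong, since such a chord borders two other $p$-gons rather than dissecting $f$. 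More tellingly, neither mechanism invokes $\den(\R) > \tfrac12$ anywhere, so they cannot possibly succeed. You also never use reflexivity, which turns out to be essential.

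The paper's argument runs as follows. Assume $v_3 := \rotf_f^2(v_1) \notin \dD{1}{v_1}$. Then the $\langle \rotv_{v_1}\rangle$-orbit of $v_3$ lies in $\V \setminus \dD{1}{v_1}$, which has size $|\V|-q < q$ since $\den(\R)>\tfrac12$; hence some $\rotv_{v_1}^j$ with $0<j<q$ fixes $v_3$. Writing $v_2 = \rotf_f(v_1)$ one has $v_3 = \rotv_{v_2}^{-1}(v_1)$, and Claim~4 of Lemma~\ref{regularity} (iterating $\elt_i = \rotv_{v_2}^{-1}$) then forces $\rotv_{v_1}^j$ to fix every $\rotv_{v_2}^{i}(v_1)$, i.e.\ all $q$ neighbours of $v_2$. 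But $\rotv_{v_1}^j$ fixes no neighbour of $v_1$, so it has fewer than $|\V|-q<q$ fixed points --- a contradiction. Therefore $v_3 \in \dD{1}{v_1}$. The final step labels the neighbours of $v_1$ by $\ZZ/q\ZZ$, uses the reflection fixing $\overrightarrow{(v_1,v_2)}$ to locate $v_3$ symmetrically among them, and then compares the two faces at $v_1$ containing the edge $v_2 v_3$ to conclude they coincide, forcing $p=3$. The fixed-point counting via Lemma~\ref{regularity} and the use of a reflection are the ideas your proposal is missing.
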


\begin{proof} Let $v \in \V$ and let $f$ be a face incident to $v$. Consider $v_1 =\rotf_{f}^2(v)$ and suppose $v_1 \not\in\dD{1}{v}$. Then the same holds for $\rotv_{v}^i(v_1)$ for any $i \in \{0,1,\ldots,q-1\}$, because it preserves distances. In particular, the size of the orbit under $\langle\rotv_v\rangle$ of $v_1$ is at most $|\V \setminus \dD{1}{v}|$. We have $|\V| < 2q$ because $\den(\R) > \frac{1}{2}$. Therefore, $|\V \setminus \dD{1}{v}| = |V|-q < 2q-q =q$. So the size of the orbit of $v_1$ under $\langle\rotv_v\rangle$ is strictly smaller than $q$, and hence there is $j \in \{1,\ldots,q-1\}$ satisfying $\rotv_{v}^j(v_1) = v_1$.

Note that on one hand, $\rotv_{v}^j$ does not fix any neighbor of $v$, and since $\den(\R) > \frac{1}{2}$, we find $\rotv_{v}^j$ has at most $|V|-q < q$ fixed points. On the other hand, observe that for $v' = \rotf_{f}(v)$, we have $v_1 = \rotv_{v'}^{-1}(v)$. By Claim $4$ of Lemma \ref{regularity}, using $g_i = \rotv_{v'}^{-1}$, we find $\rotv_{v}^j$ fixes $\rotv_{v'}^i(v)$ for any $i$, so $\rotv_{v}^j$ fixes all neighbors of $v'$, meaning it has at least $q$ fixed points, a contradiction. We conclude $v_1 \in \dD{1}{v}$.

Label the neighbors of $v$ by the elements of $\ZZ/q\ZZ$, counterclockwise and let $f_i$ be the face on the left of $\overrightarrow{(v,i)}$ (consistent with the orientation).

Suppose $\rotv_{0}^{-1}(v) = i$. Since $\R$ is reflexive and the reflection that fixes the oriented edge $\overrightarrow{(v,0)}$ maps $i$ to $-i$, we find $\rotv_{0}(v)=-i$. Observe that this means that $\rotv_{i}(v) = \rotv_{v}^i\rotv_{0}\rotv_{v}^{-i}(v) = 0$. We now see two faces on the left of $\overrightarrow{(0,i)}$, being $f_0$ and $f_{i-1}$. We conclude $f_0 = f_{i-1}$. Since $v$ occurs only once on each face (using the assumption that $\gr(\R)$ is simple), we conclude $i=1$ and hence $p=3$, as was to be shown.
\end{proof}

\begin{figure}[h]
\includegraphics[width=12cm]{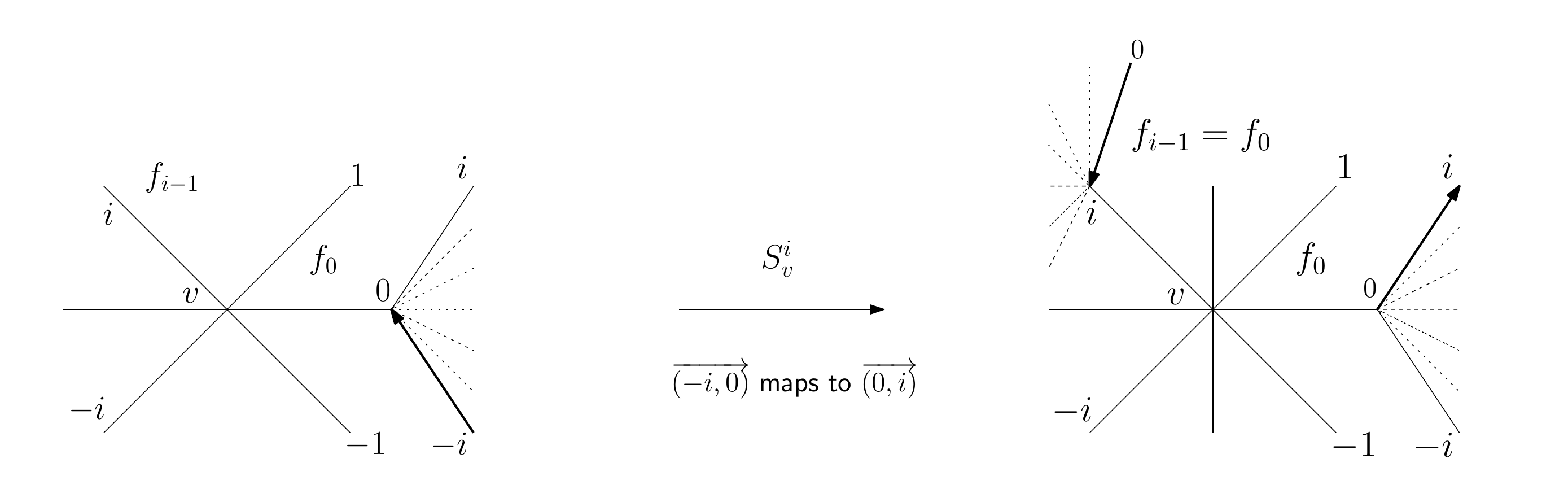}
\caption{Proof of Proposition~\ref{Prop:p=3}}\label{Fig:p=3}
\end{figure}

\begin{de} Assume we have a regular map with $p = 3$, and let $v \in \V$. A \textit{diagonal neighbor} of $v$ is an element of $\V$ of the form $\rotv_{w}^2(v)$ with $w$ a neighbor of $v$. Let $v' \in \V$ as well. We call $v$ and $v'$ \textit{diagonally aligned} if there is a sequence $(v_0,v_1,\ldots,v_n)$ of elements of $\V$ satisfying $v_0 = v$, $v_n = v'$ and for all $i \in \{1,\ldots,n\}$ the vertex $v_i$ is a diagonal neighbor of $v_{i-1}$.
\end{de}

If $v \in \V$ is part of a triangle $vwu$, then $wu$ is part of precisely one other triangle, say $wuv'$. In this case, $v'$ is a diagonal neighbor of $v$. All diagonal neighbors of $v$ are of this form.

\begin{figure}[h]
\includegraphics[width=12cm]{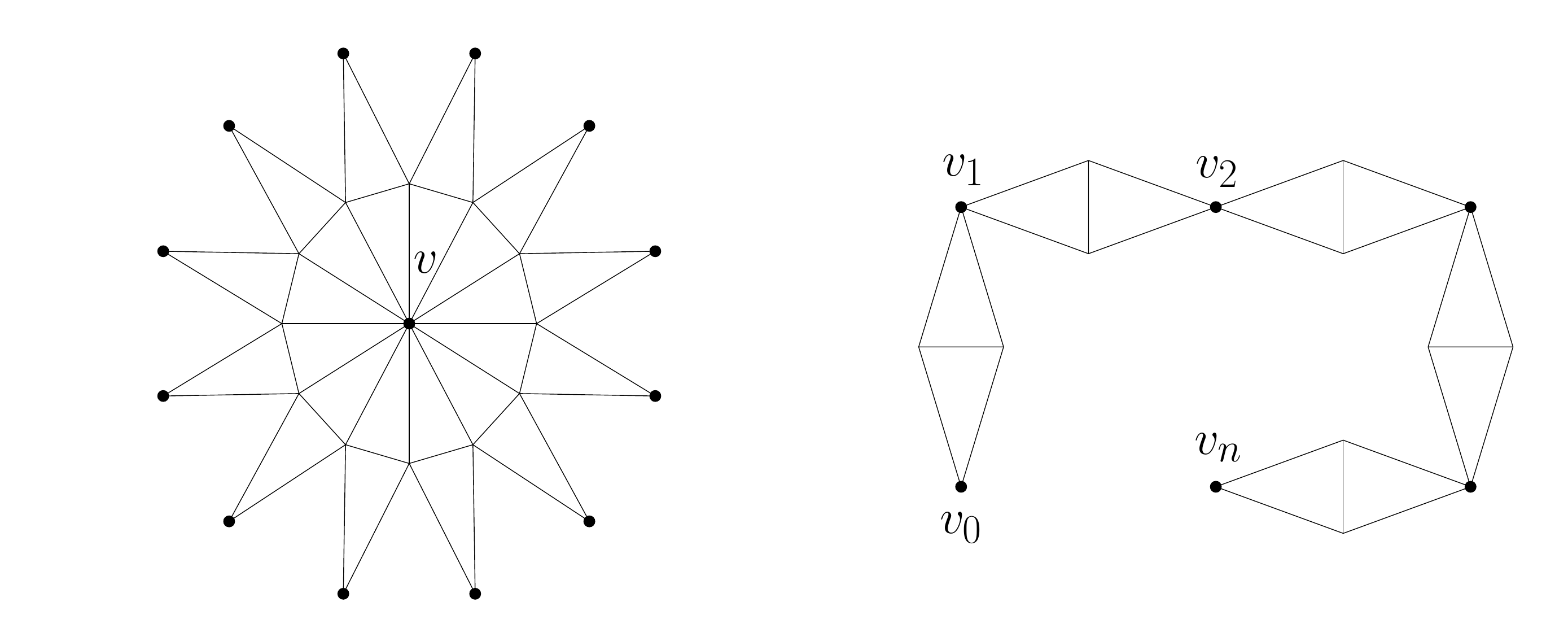}
\caption{Diagonal neighbors and diagonal alignment}\label{Fig:diag_neighb}
\end{figure}

The observation that $v'$ is a diagonal neighbor of $v$ if and only if $v$ is a diagonal neighbor of $v'$ shows that being diagonally aligned is an equivalence relation. We write $v\sim v'$ if $v$ and $v'$ are diagonally aligned. It is easy to see that being diagonally aligned is preserved by graph isomorphisms.

For $v \in \V$, we write $\V_v = \{v' \in \V: v' \sim v\}$. It is an easy exercise to see that if $vwu$ is a face of $\gr(\V)$, then any element of $\V$ is diagonally aligned to at least one of $v$, $w$ or $u$ (and in fact, either $\V_v$, $\V_w$ and $\V_u$ are pairwise disjoint or $\V_v = \V_w = \V_u = \V$).

\begin{de}
Let $\R$ be a regular map. We define $\primi$ to be the minimal element of $\{1,2,\ldots,q\}$ such that $\rotv_v^{\primi}$ fixes all elements in $\V_v$. We call $\primi$ the \emph{primitive period} of $\R$.
\end{de}

Note that $\primi$ exists because $\rotv_v^q = \id$. Moreover, $\primi$ is independent of choice of $v$ by Claim $2$ of Lemma~\ref{regularity} (using the fact that $v \sim v'$ if and only if $g(v) \sim g(v')$ for all $v,v' \in \V$ and all $g \in \aut[\R]$), so we are justified in not using a subscript. Thirdly, $\primi$ divides $q$, since $\rotv_v^q = \id$ fixes $\V_v$ pointwise.

\begin{ass}
From this point on, we add to our previous assumptions ($\R$ is a reflexive regular map with simple graph) that $\R$ satisfies $\den(\R) > \frac{1}{2}$. In particular, we will have $p = 3$ and $q \geq 2$.
\end{ass}

Let $v \in \V$. The following lemma shows that we can classify $\V_v$ as the set of fixed points of $\rotv_v^{\primi}$.

\begin{lem}\label{lem:diagJ}
Let $v \in \V$ and let $v'$ be a diagonal neighbor of $v$. Then the following claims hold.
\begin{description}
\item[1] The element $\primi$ is the minimal element in $\{1,\ldots,q\}$ such that $\rotv_v^{\primi}$ fixes $v'$.
\item[2] We have $\primi = q$ if and only if $v' \in \dD{1}{v}$.
\item[3] We have $\primi = q$ if and only if $\V_v = \V$.
\item[4] If $w \in \V$ is fixed by $\rotv_v^{\primi}$, then $w \in \V_v$.
\item[5] If $\primi < q$, then $\V_v \cap \dD{1}{v} = \emptyset$, $q$ is even, and $\den(\R) \leq \frac{2}{3}$.
\item[6] If $\primi = q$, then $q$ is odd.
\end{description}
\end{lem}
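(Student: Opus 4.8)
The plan is to establish the six claims more or less in order, using Lemma~\ref{regularity} as the main engine together with the geometric picture of a diagonal neighbor: if $v'$ is a diagonal neighbor of $v$, there is a common neighbor $w$ with $v' = \rotv_w^{\pm 2}(v)$, and the two are ``two apart'' across the triangle $vwu$.

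\emph{Claim 1.} Let $\primi'$ be the minimal element of $\{1,\ldots,q\}$ with $\rotv_v^{\primi'}$ fixing $v'$. Clearly $\primi' \le \primi$ since $\primi$ fixes all of $\V_v \ni v'$. For the reverse, I would use Lemma~\ref{regularity}: if $\rotv_v^{\primi'}$ fixes the diagonal neighbor $v'$, then (Claim 1 of Lemma~\ref{regularity}) it equals a rotation $\rotv_{v'}^{k\primi'}$ around $v'$, and (Claim 4, applied along a diagonal-alignment chain, with the $\elt_i$ the appropriate conjugates of a single ``diagonal step'' automorphism) it fixes every vertex diagonally aligned to $v$. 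Hence $\rotv_v^{\primi'}$ fixes all of $\V_v$, so $\primi \le \primi'$. The point is that being a fixed point of $\rotv_v^{\primi'}$ propagates along diagonal steps exactly as in the intuitive reading of Claim 4; the diagonal step $v_{i-1}\mapsto v_i$ is realized by an automorphism fixing a vertex (the shared vertex of the triangle) that $\rotv_v^{\primi'}$ also fixes, so Claim 4 applies.

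\emph{Claims 2 and 3.} If $v' \in \dD{1}{v}$, then $v'$ is a neighbor of $v$, so the minimal $j$ with $\rotv_v^j(v')=v'$ is $j=q$ (a nontrivial power of $\rotv_v$ cannot fix a neighbor), whence $\primi=q$ by Claim 1. Conversely, if $\primi=q$, then $\rotv_v^{\primi}=\id$ fixes everything, so $\V_v=\V$ (giving one direction of Claim 3), and in particular $\V$ has a diagonal neighbor of $v$ lying in $\dD{1}{v}$; but any two vertices of the triangle $v w u$ with $u\in\dD1v$ show a diagonal neighbor can be adjacent, and I will argue that when $\primi=q$ \emph{every} vertex, hence some neighbor, is diagonally aligned to $v$, and the fixed-point count forces the diagonal neighbor to actually be adjacent. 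More cleanly: $\primi=q \iff \rotv_v^{\primi}=\id \iff \V_v=\V$, and separately $\V_v=\V \iff$ some (equivalently the) diagonal neighbor lies in $\dD1v$, using the dichotomy noted before the lemma (either $\V_v,\V_w,\V_u$ are pairwise disjoint or all equal $\V$). I expect Claims 2 and 3 to be quick once this dichotomy is invoked.

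\emph{Claims 4, 5, 6.} Claim 4 is the converse inclusion to Claim 1's propagation: if $w$ is fixed by $\rotv_v^{\primi}$, I want $w\sim v$. Here I would again use Lemma~\ref{regularity} Claim 1 to write $\rotv_v^{\primi}=\rotv_w^{k\primi}$, then count fixed points. The set of fixed points of $\rotv_v^{\primi}$ contains $\V_v$ by definition, and one shows equality by a counting/orbit argument: if $\primi<q$ then $\rotv_v^{\primi}$ fixes no neighbor of $v$ (by Claim 2's contrapositive, since a neighbor of $v$ would be a diagonal neighbor not in... — more precisely, by Claim 2, $v'\in\dD1v$ forces $\primi=q$), so disjointness of $\V_v$ from $\dD1v$ follows, giving the first part of Claim 5. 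Then $|\V| = |\{v\}| + |\dD1v| + |\V\setminus(\dD1v\cup\{v\})| \ge 1 + q + (|\V_v|-1)$ and a parity/divisibility bound on $|\V_v|$ versus $q$ (the orbit of $v'$ under $\langle\rotv_v\rangle$ has size $\primi<q$, and diagonal alignment classes partition $\V$ into pieces of equal size by the dichotomy) yields $q$ even and $\den(\R)\le\frac23$. Claim 6: if $\primi=q$ were even, then $\rotv_v^{q/2}$ is a nontrivial involution; using reflexivity and the reflection fixing the oriented edge $\overrightarrow{(v,0)}$ (as in the proof of Proposition~\ref{Prop:p=3}), I expect to derive that $\rotv_v^{q/2}$ would have to identify the two faces incident along an edge at $v$, contradicting simplicity of $\gr(\R)$ — this is the same ``two faces on the left of an edge'' trick.

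\emph{Main obstacle.} The delicate part is the exact counting in Claim 5 (and its feeding into Claim 6), where I must pin down $|\V_v|$ in terms of $q$ and $\primi$: I need that the $\langle\rotv_v\rangle$-orbit of a diagonal neighbor has size exactly $q/\gcd(\primi,q) = q/\primi$ (since $\primi\mid q$), that $\V_v$ is a union of such orbits, and that the three classes $\V_v,\V_w,\V_u$ either coincide or partition $\V$ into three equal parts — so that $|\V| = 3|\V_v|$ in the disjoint case. Keeping the bookkeeping honest between ``fixed points of $\rotv_v^{\primi}$'', ``$\V_v$'', and ``orbits'', while simultaneously invoking reflexivity correctly for the parity statements, is where I'd be most careful; everything else is a fairly direct application of Lemma~\ref{regularity} and the triangle/simplicity geometry already used for Proposition~\ref{Prop:p=3}.
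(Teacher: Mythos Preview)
Your Claim~1 is essentially the paper's argument (the paper invokes Claim~3 of Lemma~\ref{regularity} rather than Claim~4, but the propagation idea is the same).

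The real gap is in Claims~2 and~3. You write ``$\primi=q \iff \rotv_v^{\primi}=\id \iff \V_v=\V$'', but $\rotv_v^q=\id$ holds always, so this chain carries no information in the forward direction. The nontrivial implication --- that $v' \notin \dD{1}{v}$ forces $\primi < q$ --- genuinely needs the standing hypothesis $\den(\R) > \tfrac12$, which you never invoke. The paper argues: $\rotv_v$ preserves each of $\{v\}$, $\dD{1}{v}$, $\dD{2}{v}$; the first and last have size strictly less than $q$ by the density bound, so if $v' \notin \dD{1}{v}$ its $\langle \rotv_v\rangle$-orbit has size $<q$, hence some $\rotv_v^j$ with $0<j<q$ fixes $v'$, and then $\primi<q$ by Claim~1. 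Claim~3 follows \emph{from} Claim~2, not the other way round: $\primi=q$ gives $v'\in\dD{1}{v}$; since $\V_v$ is $\langle\rotv_v\rangle$-stable it then contains all $q$ neighbors of $v$, so $|\V_v|>\tfrac12|\V|$ and the trichotomy forces $\V_v=\V$.

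There are also secondary issues. Your orbit size is backwards: the $\langle \rotv_v\rangle$-orbit of $v'$ has size $\primi$, not $q/\primi$ (the stabilizer is $\langle\rotv_v^{\primi}\rangle$, of order $q/\primi$). Your Claim~5 sketch does not actually produce the parity conclusion; the paper's device is to look at $\{\rotv_w^{2i}(v):i\ge 0\}\subseteq\V_v$ for a neighbor $w$ of $v$, and observe that for $q$ odd this set contains $\rotv_w^{q-1}(v)=\rotv_w^{-1}(v)\in\dD{1}{v}$, contradicting $\V_v\cap\dD{1}{v}=\emptyset$. For Claim~6, your ``$\rotv_v^{q/2}$ and face identification'' idea is not the paper's route and is not obviously workable as stated; the paper instead uses Claim~2 to get $v'\in\dD{1}{v}$, takes the reflection reversing $\overrightarrow{(w,\rotv_w(v))}$, notes it fixes $v$ and (since $q$ is even and it swaps two adjacent neighbors of $v$) fixes no neighbor of $v$, yet it also fixes $v'\in\dD{1}{v}$ --- contradiction.
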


\begin{proof} Claim $\mathbf{1}$ can be shown by induction. Suppose there is $\primi'$ such that $\rotv_v^{\primi'}$ fixes $v'$. Then by Claim $3$ of Lemma~\ref{regularity}, it fixes any diagonal neighbor of $v$ (since all of these are of the form $\rotv_{v}^i(v')$), and analogously it fixes any diagonal neighbor of any diagonal neighbor of $v$, etcetera. Hence $\rotv_v^{\primi'}$ fixes $\V_v$, and hence $\primi$ divides $\primi'$. Clearly, $\rotv_v^{\primi}$ fixes $v'$, so this shows Claim $\mathbf{1}$.

For Claim $\mathbf{2}$, note that $\rotv_v$ induces bijections on the sets $\{v\}$, $\dD{1}{v}$ and $\dD{2}{v}$. Note that $\dD{1}{v}$ has cardinality $q$ and both $\{v\}$ and $\dD{2}{v}$ have cardinality less than $q$ since $\den(\R) > \frac{1}{2}$. In particular, if $v' \not \in \dD{1}{v}$, its orbit under the powers of $\rotv_v$ has cardinality strictly less than $q$, and hence $\primi < q$. Conversely, if $v' \in \dD{1}{v}$, then it is fixed by $\rotv_v^{\primi}$, and therefore $\primi = q$.

For Claim $\mathbf{3}$, note that if $\V_v = \V$, then it contains a neighbor of $v$, and hence $\primi = q$. Conversely, if $\primi = q$, then $v'$ is a neighbor of $v$ by the second part. Since $\V_v$ is the equivalence class of $v$ under $\sim$, any rotation around $v$ fixes it as a set. Hence all neighbors of $v$ are elements of $\V_v$. But then $|\V_v| \geq q > \frac{1}{2}|\V|$, and hence $\V_v = \V$ (using the fact that either $|\V_v| = \frac{1}{3}|\V|$ or $\V_v = \V$).

For Claim $\mathbf{4}$, observe that if $\rotv_v^{\primi}$ fixes $w$, then it also fixes $\V_w$. If $\V_w \neq \V_v$, then $\V_w$ contains a neighbor of $v$, and hence $\primi = q$. This gives a contradiction, since by Claim $\mathbf{3}$, $\V_v = \V$ and hence $w \in \V_v$.

For Claim $\mathbf{5}$, suppose $\primi < q$. If $\V_v \cap \dD{1}{v} \neq \emptyset$, then $\rotv_v^{\primi}$ fixes a neighbor of $v$, which gives a contradiction. So $\V_v \cap \dD{1}{v} = \emptyset$.

Let $w$ be a neighbor of $v$ and consider the set $\{\rotv_w^{2i}(v): i \in \{0,1,\ldots,\lfloor \frac{q-1}{2} \rfloor\}\}$ of cardinality $1+\lfloor \frac{q-1}{2} \rfloor\}$. All of these elements lie in $\V_v$. If $q$ is odd, then this set contains $\rotv_w^{q-1}(v)$, which is a neighbor of $v$, a contradiction. Hence $q$ is even, and this set has cardinality $\frac{q}{2}$. In particular, we find $|\V \setminus \dD{1}{v}| \geq \frac{q}{2}$, and hence $\den(\R) \leq \frac{q}{q + \frac{q}{2}} = \frac{2}{3}$.

For Claim $\mathbf{6}$, suppose $\primi = q$ and $q$ is even. Then $v' \in \dD{1}{v}$. Let $w \in \dD{1}{v}$ such that $v' = \rotv_w^2(v)$, and note that there is a directed edge $\overrightarrow{(w,\rotv_w(v))}$. Consider the (unique) reflection that maps this edge to $\overrightarrow{(\rotv_w(v),w)}$. Since $q$ is even, this reflection does not fix any neighbor of $v$. On the other hand, it fixes $v'$, a contradiction.
\end{proof}

We can now show that the only reflexive regular map of density greater than $\frac{1}{2}$ with $q$ odd is $\tet$. After showing this, we can focus on the case where $q$ is even.

\begin{prop}\label{classes} Suppose $\den(\R) > \frac{1}{2}$ and $q$ is odd. Then $\R$ is $\tet$, and $\den(\R) = \frac{3}{4}$.
\end{prop}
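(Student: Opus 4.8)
My plan is to use Lemma~\ref{lem:diagJ} to reduce everything to the case $\primi=q$, read off the local combinatorics of the triangulation near a vertex and its neighbours, and then obtain a contradiction by comparing the cyclic ``fan'' of faces at a neighbour of $v$ with its length $q$; this will force $q\in\{3,5\}$, and the two survivors are then eliminated by hand. First: since $q$ is odd, Claim~$\mathbf{5}$ of Lemma~\ref{lem:diagJ} (which would yield $q$ even) cannot apply, so $\primi=q$. Claim~$\mathbf{3}$ then gives $\V_v=\V$ for every $v$, and Claim~$\mathbf{2}$ says every diagonal neighbour of every vertex is one of its neighbours --- equivalently, along each edge of $\R$ the two triangular faces span a $K_4$.

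Next I would fix $v$ and list its neighbours $i_0,\dots,i_{q-1}$ (indices mod $q$) in the cyclic order given by $\rotv_v$. Since $p=3$, the faces at $v$ are the triangles $f_k=\{v,i_k,i_{k+1}\}$, so consecutive neighbours are adjacent, and the edge $\{i_k,i_{k+1}\}$ lies in exactly one further face $g_k=\{i_k,i_{k+1},w_k\}$, where $w_k$ is the diagonal neighbour of $v$ across $f_k$; by the reduction $w_k\in\dD{1}{v}$, say $w_k=i_{m_k}$. As $\rotv_v$ carries $f_k$ to $f_{k+1}$ (hence $g_k$ to $g_{k+1}$), one gets $m_{k+1}\equiv m_k+1$. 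Here reflexivity enters: the reflection fixing $\overrightarrow{(v,i_0)}$ fixes $v$ and $i_0$, reverses the cyclic order of edges at $v$ (so it acts by $i_\ell\mapsto i_{-\ell}$), and sends $f_0$ to $f_{-1}$, hence $g_0$ to the other face on $\{i_0,i_{-1}\}$, namely $g_{-1}=\{i_0,i_{-1},i_{m_0-1}\}$; comparing third vertices gives $2m_0\equiv1\pmod q$, so $m_0=s:=\tfrac{q+1}{2}$ and $g_k=\{i_k,i_{k+1},i_{k+s}\}$.

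Then, using $2s\equiv1\pmod q$ (so $-s\equiv s-1$ and $1-s\equiv s$), the triangles $f_0=\{i_0,v,i_1\}$, $f_{-1}=\{i_0,v,i_{-1}\}$, $g_0=\{i_0,i_1,i_s\}$, $g_{-1}=\{i_0,i_{-1},i_{s-1}\}$ and $g_{-s}=\{i_0,i_{s-1},i_s\}$ are all faces containing $i_0$, and are pairwise distinct as soon as $q\ge5$. In the cyclic fan of the $q$ faces at $i_0$ (consecutive faces sharing an edge at $i_0$) these five are linked successively by the edges $\{i_0,i_1\},\{i_0,v\},\{i_0,i_{-1}\},\{i_0,i_{s-1}\},\{i_0,i_s\}$, and so close up into a $5$-cycle; since a $q$-cycle has no shorter subcycle, $q=5$ --- or else $q\le3$, i.e.\ $q=3$. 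If $q=5$, then $\den(\R)>\tfrac12$ gives $|\V|<10$, while $2\mid q|\V|$, $3\mid q|\V|$ and $|\V|\ge q+1$ force $|\V|=6$; then every vertex has all $5$ others as neighbours, so $\gr(\R)=K_6$ and $|\V|-|\E|+|\F|=6-15+10=1$, which is odd and contradicts the orientability of $\Sigma(\R)$. Hence $q=3$, so $p=q=3$; as $\langle\rotf,\rotv\mid\rotf^3,\rotv^3,(\rotf\rotv)^2\rangle\cong A_4$ is finite of order $12$ and $\rotv$ has order exactly $3$, we get $\autp[\R]\cong A_4$, $|\V|=4$, $\R=\tet$, and $\den(\R)=\tfrac34$.

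The main obstacle I anticipate is the bookkeeping in the two middle steps: identifying the two faces on each relevant edge, pinning down $m_0$ via the reflection, and checking that the five faces at $i_0$ really are distinct and really are consecutive in the fan. Everything hinges on the congruence $2s\equiv1\pmod q$, which is exactly what reflexivity supplies; for a chiral map the shift $m_0$ need not equal $s$ and the fan need not close up, consistent with the theorem being stated only for reflexive maps.
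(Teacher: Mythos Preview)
Your proof is correct. It shares its opening with the paper's argument: both invoke Lemma~\ref{lem:diagJ} to obtain $\primi=q$, label the neighbours of $v$ cyclically, and use reflexivity (a reflection fixing an edge at $v$) to pin down the position of the diagonal neighbour as $\tfrac{q+1}{2}$ (your $m_0=s$). Where the two arguments diverge is the endgame. The paper, having located the face $\{0,\,q-1,\,\tfrac{q-1}{2}\}$, simply applies the rotation $\rotv_v^{(q+1)/2}$ to it and reads off from orientation-preservation that $\tfrac{q+1}{2}\equiv q-1\pmod q$, which forces $q=3$ in one line. You instead assemble five explicit faces at $i_0$, observe that their adjacencies close up into a $5$-cycle inside the $q$-cycle fan, conclude $q\in\{3,5\}$, and then eliminate $q=5$ by an Euler-characteristic parity check. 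Your route is sound but manufactures a spurious case that must be disposed of separately; the paper's rotational trick avoids this detour and reaches $q=3$ directly.
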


\begin{proof} Let $v \in \V$. By Lemma~\ref{lem:diagJ}, the primitive period $\primi$ of $\R$ is equal to $q$. Number the neighbors of $v$ by $0,1,\ldots,q-1$ clockwise and let $v' = \rotv_{0}^2(v)$, a diagonal neighbor of $v$. It is a neighbor of $v$ by the second part of Lemma~\ref{lem:diagJ}.

Now, consider the reflection that maps that maps $\overrightarrow{(0,q-1)}$ to $\overrightarrow{(q-1,0)}$. It fixes $v$ and $v'$. Moreover, it maps
any neighbor $i$ of $v$ to $-i-1 \mod q$. We conclude $v' = \frac{q-1}{2}$.

Observe that we have a face $0,q-1,\frac{q-1}{2}$. Applying the rotation $\rotv_{v}^{\frac{q+1}{2}}$ yields the triangle $\frac{q+1}{2},\frac{q-1}{2},0$. Because rotations preserve orientation, we conclude $\frac{q+1}{2} \equiv q-1 \mod q$. Since $q > 2$, we conclude $\frac{q+1}{2} = q-1$ and hence $q = 3$. We now easily deduce $\R = \tet$.
\end{proof}

\begin{ass} From here on, we'll assume $\R$ is a reflexive regular map with simple graph satisfying $\den(\R) > \frac{1}{2}$ that is not equal to $\tet$. In particular, we will have $p = 3$, $q$ is even, and $\den(\R) \leq \frac{2}{3}$. Moreover, we have $\primi < q$, and for all $v$, no neighbor of $v$ is diagonally aligned to $v$.
\end{ass}

Suppose $vwu$ is a face of $\gr(\R)$. Then $\rotv_v$ maps $\V_w$ to $\V_u$ and vice versa. In particular, $\rotv_v^2$ is a bijection of $\V_w$ and $\V_u$ (and of $\V_v$ of course). This motivates us to give the following definition:

\begin{de}
Let $\primt := \mathrm{lcm}(\primi,2)$. We call $\primt$ the \emph{even period} of $\R$.
\end{de}

\begin{lem}\label{lem:primtand4commute}
For any $v,w \in \V$, we have $[\rotv_v^{\primt},\rotv_w^{\primt}] = \id$ and $[\rotv_v^{\primt},\rotv_w^{4}] = \id$.
\end{lem}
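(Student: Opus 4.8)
The idea is to pin down exactly which vertices the commutator fixes, and then collapse it to the identity using the fact that an orientation-preserving automorphism fixing a vertex $x$ is a power of $\rotv_x$, which acts on the $q$ edges at $x$ as a single $q$-cycle. First I would dispose of the case $v\sim w$: then $\V_v=\V_w$, and since $\primi\mid\primt$ the rotation $\rotv_v^{\primt}$ fixes $\V_v$ pointwise, in particular it fixes $w$, so $\rotv_v^{\primt}\in\langle\rotv_w\rangle$ and hence commutes with both $\rotv_w^{\primt}$ and $\rotv_w^{4}$. So from now on assume $v\not\sim w$. Using Lemma~\ref{lem:diagJ} and $\primi<q$, the diagonal classes are pairwise disjoint of common size $\tfrac13|\V|$, so there are exactly three of them; $\V_v$ and $\V_w$ are two, call the third $\V'$.

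The core step is to show that $c:=[\rotv_v^{\primt},\rotv_w^{\primt}]$ fixes every vertex of $\V_v\cup\V_w$. Two ingredients feed into this: (a) $\rotv_v^{\primt}$ fixes $\V_v$ pointwise and $\rotv_w^{\primt}$ fixes $\V_w$ pointwise (as $\primi\mid\primt$); and (b) each of $\rotv_v,\rotv_w$ permutes $\{\V_v,\V_w,\V'\}$, fixing its own class and transposing the other two, so the \emph{even} powers $\rotv_v^{\primt}$ and $\rotv_w^{\primt}$ fix each of the three classes setwise. Given (a) and (b), for $x\in\V_v$ one evaluates $c(x)=\rotv_v^{\primt}\rotv_w^{\primt}(\rotv_v^{\primt})^{-1}(\rotv_w^{\primt})^{-1}(x)$ from the right: $(\rotv_w^{\primt})^{-1}(x)$ still lies in $\V_v$ by (b), hence is fixed by $(\rotv_v^{\primt})^{-1}$ by (a); applying $\rotv_w^{\primt}$ gives back $x$, which $\rotv_v^{\primt}$ fixes by (a); so $c(x)=x$. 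For $x\in\V_w$ the roles of $v$ and $w$ are swapped ($(\rotv_w^{\primt})^{-1}$ now fixes $x$ at the outset and $\rotv_v^{\primt}$ does the cancelling), and again $c(x)=x$. Thus $c$ is orientation-preserving and fixes $\V_v\cup\V_w$ pointwise.

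To conclude, pick any $x\in\V_v$. Since $c$ is orientation-preserving and $c(x)=x$, we have $c=\rotv_x^{b}$ for some $b$. Every neighbour of $x$ lies in $\V_w\cup\V'$ (no neighbour of $x$ is $\sim x$), and $\rotv_x$ runs through the $q$ neighbours in a single $q$-cycle, alternating between $\V_w$ and $\V'$ since it swaps these two classes; as $q$ is even, some neighbour $y$ of $x$ lies in $\V_w$. Because $c$ fixes $\V_w$ pointwise, $\rotv_x^{b}$ fixes the neighbour $y$, and since $\rotv_x$ acts on the $q$ neighbours as a $q$-cycle this forces $q\mid b$, i.e.\ $c=\id$. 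For the identity $[\rotv_v^{\primt},\rotv_w^{4}]=\id$ I would repeat the argument verbatim with $\rotv_w^{4}$ in place of $\rotv_w^{\primt}$: the exponent $4$ is again even, so (b) still applies, and $\rotv_w^{4}$ fixes $\V_w$ pointwise provided $\primi\mid 4$ (equivalently $\primt\mid4$), in which case the same computation gives the claim; when $\primt\mid4$ one can alternatively just note $\rotv_w^{4}$ is a power of $\rotv_w^{\primt}$ and quote the first identity.

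The step I expect to cost the most thought is making the commutator's fixed-point set large enough: knowing only that $c$ fixes $\V_v$ pointwise is insufficient, as that merely places $c$ in the cyclic group $\langle\rotv_x^{\primi}\rangle$; the decisive move is to exploit the symmetric roles of $v$ and $w$ to produce, in addition, a fixed \emph{neighbour} of $x$, which is what collapses $c$ to $\id$. A secondary point is the condition $\primi\mid4$ entering the second identity, which I would either verify in the range where the lemma is subsequently used, or circumvent by expressing $\rotv_w^{4}$ through $\rotv_w^{\primt}$.
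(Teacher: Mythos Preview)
Your argument for the first identity $[\rotv_v^{\primt},\rotv_w^{\primt}]=\id$ is correct and is essentially the paper's argument: you show the commutator fixes $\V_v\cup\V_w$ pointwise and then kill it by producing a fixed directed edge. The paper phrases the endgame as ``pick $v'\in\V_v$ adjacent to $w$, then $c$ fixes $\overrightarrow{(v',w)}$,'' which is the same move as your ``pick $x\in\V_v$ and a neighbour $y\in\V_w$.''

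The second identity, however, has a genuine gap. Your plan repeats the pointwise-fixing computation with $\rotv_w^{4}$ in place of $\rotv_w^{\primt}$, but for the $\V_w$-half of that computation you need $\rotv_w^{4}$ to fix $\V_w$ \emph{pointwise}, i.e.\ $\primi\mid 4$. You notice this yourself, but neither of your proposed workarounds is available: the condition $\primt\mid 4$ is exactly the same constraint (so ``express $\rotv_w^{4}$ as a power of $\rotv_w^{\primt}$'' gains nothing new), and you cannot ``verify $\primi\mid 4$ in the range where the lemma is used'' because the eventual fact $\primt=2$ (Proposition~\ref{j=2}) is proved downstream of Lemma~\ref{k=1}, which in turn \emph{uses} the identity $[\rotv_v^{\primt},\rotv_w^{4}]=\id$. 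So the argument would be circular.

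The paper avoids this by a different route that does not touch $\V_w$ at all. One rewrites
\[
[\rotv_v^{\primt},\rotv_w^{4}]
=\rotv_v^{\primt}\bigl(\rotv_w^{4}\rotv_v^{-\primt}\rotv_w^{-4}\bigr)
=\rotv_v^{\primt}\,\rotv_{\rotv_w^{4}(v)}^{-\primt},
\]
so it suffices to show $\rotv_{v'}^{\primt}=\rotv_{\rotv_w^{4}(v')}^{\primt}$ for some $v'\in\V_v$ (and hence for $v$). Take $v'\in\V_v$ adjacent to $w$ and set $v'':=\rotv_w^{2}(v')$. Then $v''$ is a diagonal neighbour of $v'$ and also of $\rotv_w^{4}(v')$. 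By Claim~1 of Lemma~\ref{regularity} there is a single constant $k$ (depending only on the diagonal-neighbour relation, via Claim~2) with $\rotv_{v'}^{\primt}=\rotv_{v''}^{k\primt}$ and $\rotv_{\rotv_w^{4}(v')}^{\primt}=\rotv_{v''}^{k\primt}$, whence the two rotations coincide and the commutator vanishes. This step is the missing idea in your proposal.
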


\begin{proof} If $v\sim w$, we have $\rotv_v^{\primt}$ is a rotation around $w$, and hence it commutes with $\rotv_w$, which implies both of the relations.

Suppose $v\not\sim w$. Note that $\rotv_v^{\primt}$ fixes $\V_v$ pointwise and hence for any $v' \in \V_v$, we have $[\rotv_v^{\primt},\rotv_w^{\primt}](v') = \rotv_v^{\primt}\rotv_w^{\primt}\rotv_v^{-\primt}\rotv_w^{-\primt}(v') = \rotv_w^{\primt}\rotv_w^{-\primt}(v') = v'$, using the fact that $\rotv_w^{\primt}$ fixes $\V_v$ as a set and $\rotv_v^{\primt}$ fixes $\V_v$ pointwise. Likewise, $[\rotv_v^{\primt},\rotv_w^{\primt}]$ fixes $\V_w$ pointwise. There is $v' \in \V_v$ such that $v'$ is a neighbor of $w$. Now $[\rotv_v^{\primt},\rotv_w^{\primt}]$ fixes the edge $\overrightarrow{(v',w)}$ and hence it is $\id$.

Also, we have $[\rotv_v^{\primt},\rotv_w^{4}] = \rotv_v^{\primt}\rotv_{\rotv_w^4(v)}^{-\primt}$. Let $v'' = \rotv_w^2(v')$. We see that both $v'$ and $\rotv_w^4(v')$ are diagonal neighbors of $v''$. There is $k$ such that $\rotv_{v'}^j = \rotv_{v''}^{kj}$ by Claim $1$ of Lemma~\ref{regularity}. Note that $k$ only depends on the fact that $v''$ is a diagonal neighbor of $v'$. This means that $\rotv_{\rotv_w^4(v')}^{\primt} = \rotv_{v''}^{k\primt}$ as well, since $v''$ is a diagonal neighbor of $\rotv_w^4(v')$. We conclude $\rotv_{\rotv_w^4(v')}^{\primt} = \rotv_{v'}^{\primt}$ and hence $[\rotv_{v'}^{\primt},\rotv_w^{4}] = \id$. This implies $[\rotv_{v}^{\primt},\rotv_w^{4}] = \id$.
\end{proof}

Let $v \in \V$ and let $v'$ be a diagonal neighbor of $v$. We have $\rotv_{v}^{\primt} = \rotv_{v'}^{k\primt}$ for some $k$, uniquely defined modulo $\frac{q}{\primt}$. By Claim $2$ of Lemma~\ref{regularity}, we find $k$ is independent of choice of $v$ and $v'$ (as long as they are diagonal neighbors). In particular, we find $\rotv_{v'}^{\primt} = \rotv_v^{k\primt}$ and hence $k^2 \equiv 1 \mod \frac{q}{\primt}$. We will show something stronger, namely the following.

\begin{lem}\label{k=1}
Let $v \in \V$ and let $v'$ be a diagonal neighbor of $v$. Then $\rotv_v^{\primt} = \rotv_{v'}^{\primt}$.

Moreover, for any face $vuw$ of $\gr(\R)$, we have $\rotv_v^{\primt}\rotv_u^{\primt}\rotv_w^{\primt} = \id$.
\end{lem}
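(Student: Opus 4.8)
The plan is to organize everything around the subgroup $N=\langle \rotv_x^{\primt}:x\in\V\rangle$. By Lemma~\ref{lem:primtand4commute} any two generators of $N$ commute, so $N$ is abelian; and since conjugation by $\elt\in\aut[\R]$ sends $\rotv_x^{\primt}$ to $\rotv_{\elt(x)}^{\pm\primt}\in N$, the subgroup $N$ is normal in $\aut[\R]$. Recall from the paragraph preceding the lemma that $\rotv_v^{\primt}=\rotv_{v'}^{k\primt}$ with one and the same integer $k$ for every diagonal‑neighbour pair, and $k^{2}\equiv 1\pmod{q/\primt}$. Hence the first assertion is equivalent to $k\equiv 1\pmod{q/\primt}$, i.e. to triviality of
\[
g:=\rotv_v^{\primt}\rotv_{v'}^{-\primt}=\rotv_v^{(1-k)\primt}\in N,
\]
an element which, one checks, does not depend on the chosen diagonal neighbour $v'$. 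I would prove $g=\id$ first and then deduce the "moreover".

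Since $\rotv_v^{\primt}$ and $\rotv_{v'}^{\primt}$ both fix $\V_v=\V_{v'}$ pointwise, so does $g$; thus $\mathrm{Fix}(g)\supseteq\V_v$, and (being a power of $\rotv_v$) $g$ fixes a neighbour of $v$ only if $g=\id$. Now fix a face $vuw$. By Claim~$4$ of Lemma~\ref{regularity}, $\mathrm{Fix}(g)$ is invariant under any automorphism carrying one of its vertices to another; since $\rotf$ (rotation about $vuw$) maps the class $\V_v$ onto $\V_u$ and $\V_u$ onto $\V_w$, the moment $\mathrm{Fix}(g)$ meets $\V_u$ or $\V_w$ it contains all of $\V_u\cup\V_w$, so $\mathrm{Fix}(g)=\V$ and $g=\id$. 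It therefore remains to exclude the possibility that $g\neq\id$ with $\mathrm{Fix}(g)=\V_v$ exactly. Here I would bring in three further constraints on such a $g$: (a) $g$ commutes with every $\rotv_x^{\primt}$ (abelianness of $N$), which forces $\rotv_{\rotv_x^{\primt}(v)}^{(1-k)\primt}=g$ for all $x$, so that the same nontrivial rotation $g$ is realised at many vertices of $\V_v$ simultaneously; (b) the unique reflection fixing $v$ and $v'$ and interchanging $u$ and $w$ conjugates $g$ to $g^{-1}$ while preserving $\V_v$; (c) $\den(\R)\le\tfrac23$ together with Lemma~\ref{lem:diagJ}(5) bounds $|\V\setminus\dD{1}{v}|$, hence limits how many vertices $g$ can fix. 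I expect (a)–(c) to force $g$ to fix some directed edge, whence $g=\id$ by Lemma~\ref{lem:autdefbyedge}; an alternative route is to apply Proposition~\ref{prop:modding} to the nontrivial abelian normal subgroup $N$ (which reverses no edge, since a generator fixes its class pointwise and the classes are independent sets) and to read off the relation in the quotient map $\R/N$.

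Granting the first assertion, the "moreover" follows. For a face $vuw$ put $h:=\rotv_v^{\primt}\rotv_u^{\primt}\rotv_w^{\primt}\in N$. Conjugation by $\rotf$ cyclically permutes the three factors, so, $N$ being abelian, $\rotf h\rotf^{-1}=h$. Moreover the two reflections of $\R$ fixing the face $vuw$ — the one fixing $v$ and swapping $u,w$, and the one fixing the directed edge $\overrightarrow{(v,u)}$ — each invert $h$ (using the first assertion to replace $\rotv_{w'}^{\primt}$ by $\rotv_w^{\primt}$ for the diagonal neighbour $w'$ produced by the second reflection), so their product, an orientation‑preserving element of the form $\rotv_v^{\pm1}$, fixes $h$. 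Thus $h$ commutes with $\rotf$ and with $\rotv_v$, hence with all of $\autp[\R]$, so $h$ is central. A central automorphism that fixes a vertex is the identity: if it fixes $v$ it equals some $\rotv_v^{m}$, and centrality gives $\rotv_v^{m}=\rotf\rotv_v^{m}\rotf^{-1}=\rotv_{\rotf(v)}^{m}$, a rotation about $v$ fixing the adjacent vertex $\rotf(v)$, hence trivial. Finally a direct computation gives $h(v)=\rotv_u^{\primt}\rotv_w^{\primt}(v)\in\V_v$, and I would pin this down to $h(v)=v$ by propagating the identity $\rotv_w^{2}(v)=\rotv_u^{-2}(v)$ (supplied by the reflection fixing $v$ and $v'$) up to $\primt$‑th powers with the help of Lemma~\ref{lem:primtand4commute}; then $h=\id$, which is the "moreover".

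The step I expect to be the real obstacle is the middle one: ruling out a nontrivial $g$ with $\mathrm{Fix}(g)=\V_v$. Every "soft" symmetry available (normality and abelianness of $N$, the inverting reflection, the order‑three action of $\rotf$ on the three classes) is already consistent with such a $g$, so the contradiction has to come from a quantitative ingredient — the bound $\den(\R)\le\tfrac23$ and the precise way the elements $\rotv_x^{\primt}$ move $\V_v$ around — and making that bite is where the genuine work lies.
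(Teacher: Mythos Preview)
Your framework is reasonable up to the point you yourself flag, but that point is the whole lemma. Reducing the first assertion to showing $g=\rotv_v^{(1-k)\primt}=\id$ is fine, and your observation that $\mathrm{Fix}(g)$ meeting $\V_u\cup\V_w$ forces $g=\id$ is correct (indeed simpler than you make it: $g$ would then be a rotation about some $u_0\in\V_u$ while fixing a neighbour of $u_0$ lying in $\V_v$). But the case $\mathrm{Fix}(g)=\V_v$ is not excluded by anything you write. Your constraints (a)--(c) are each compatible with a nontrivial such $g$: abelianness of $N$, a reflection inverting $g$, and the bound $\den(\R)\le\tfrac23$ place no evident obstruction on a rotation realised simultaneously about every vertex of $\V_v$. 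The quotient route via Proposition~\ref{prop:modding} is also not a shortcut here, since controlling the quotient map already requires the structure of $N$ you are trying to establish. So as written the proposal is a plan, not a proof.

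The paper's argument is concrete and different in kind. It fixes a common neighbour $w$ of $v$ and $v'=\rotv_w^2(v)$, labels $\dD{1}{w}=\{v_0,\dots,v_{q-1}\}$, and first shows by counting (if some $\rotv_v^{\primt}(v_i)\notin\dD{1}{w}$ then, using $[\rotv_v^{\primt},\rotv_w^4]=\id$, at least $q/4$ elements of $\V_{v_i}$ lie in $\dD{2}{w}$, contradicting $|\V_{v_i}|=|\V|/3<3q/4$) that $\rotv_v^{\primt}$ preserves $\V_{v_1}\cap\dD{1}{w}$. Writing $\rotv_v^{\primt}(v_1)=v_{1+\primt_1}$, the reflection through the midpoint of $vw$ turns $\rotv_w^{-\primt_1}\rotv_v^{\primt}=\rotv_{v_1}^{x}$ into its mirror image, and combining gives $\rotv_v^{\primt+\primt_1}=\rotv_w^{\primt+\primt_1}$, hence $\primt_1\equiv-\primt$. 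The same computation at $v'$ yields $\rotv_{v'}^{\primt}(v_1)=v_{1-\primt}$ as well, so $\rotv_v^{\primt}=\rotv_{v'}^{\primt}$. The ``moreover'' then drops out of the same relation $\rotv_w^{\primt}\rotv_v^{\primt}=\rotv_{v_1}^{x}$ after one conjugation by $\rotf$. This reflection-and-counting computation is exactly the ``quantitative ingredient'' you allude to but do not supply.

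Your centrality argument for the ``moreover'' is a pleasant alternative once the first assertion is in hand, and the step you leave open there does close: centrality gives $\rotv_v(h(v))=h(v)$, and $\rotv_v$ fixes no vertex other than $v$ because any second fixed vertex $w$ would satisfy $\dD{1}{w}\subseteq\dD{1}{v}$ (the $\rotv_v$-orbit of each neighbour of $w$ has size $q$, while $|\dD{2}{v}|<q$), forcing $\dD{1}{w}=\dD{1}{v}$ and then an easy contradiction with simplicity of the graph. But this only becomes relevant after the first assertion is actually proved.
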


\begin{proof} Let $w$ be a neighbor of both $v$ and $v'$ such that $\rotv_w^2(v) = v'$. Label the neighbors of $w$ counterclockwise with the elements of $\ZZ/q\ZZ$ with $v_0 = v$.

We have $\rotv_{v_i}^{\primt} = \rotv_{v_{i+4m}}^{\primt}$ for any $m \in \ZZ$ using the fact that $\rotv_{v_i}^{\primt}$ commutes with $\rotv_w^4$ by the previous lemma.

Note that $\V_v \cap \dD{1}{w} = \{v_{2m}: m \in \ZZ\}$ and $\V_{v_1} \cap \dD{1}{w} = \{v_{2m+1}: m \in \ZZ\}$. We claim that $\rotv_v^{\primt}$ fixes $\V_{v_1} \cap \dD{1}{w}$ as a set. Note that it fixes $\V_{v_1}$ as a set since $\primt$ is even.

Suppose $\rotv_{v_0}^{\primt}(v_i)$ is not a neighbor of $w$ for some $i$. Then also $\rotv_{v_{4m}}^{\primt}(v_{i+4m})$ is not a neighbor of $w$ (because $\rotv_{v_{4m}}^{\primt}(v_{i+4m})$ is simply $\rotv_w^{4m}(\rotv_{v_0}^{\primt}(v_i))$, and rotations around $w$ preserve distance to $w$). Since $\rotv_{v_{4m}}^{\primt} = \rotv_v^{\primt}$, we find that $\rotv_{v_0}^{\primt}(v_{i+4m})$ is not a neighbor of $w$ for any $m$. This means there are at least $\frac{q}{4}$ distinct elements in $\V_{v_i}$ at distance $2$ of $w$. Moreover, there are $\frac{q}{2}$ distinct elements in $\V_{v_i}$ at distance $1$ of $w$. Hence we find $|\V_{v_i}| \geq \frac{q}{2} + \frac{q}{4} = \frac{3}{4}q > \frac{3}{8}|\V| > \frac{1}{3} |\V| = |\V_{v_i}|$, which gives a contradiction. This shows that $\rotv_v^{\primt}$ fixes $\V_{v_i} \cap \dD{1}{w}$ as a set for all $i$.

Suppose $\rotv_{v}^{\primt}(v_1) = v_{1+\primt_1}$ and $\rotv_v^{\primt}(v_{-1}) = v_{-1+\primt_{-1}}$. Note that $\rotv_v^{\primt}(v_{\pm1 + 4m}) = v_{\pm1+4m+\primt_{\pm1}}$, using $\rotv_v^{\primt} = \rotv_w^{4m}\rotv_v^{\primt}\rotv_w^{-4m}$.

Observe that $\rotv_w^{-\primt_1}\rotv_{v}^{\primt}$ fixes $v_1$, hence it is equal to $\rotv_{v_1}^x$ for some $x$. Consider the reflection $\sigma$ that maps $\overrightarrow{(v,w)}$ to $\overrightarrow{(w,v)}$. It fixes $v_1$. Conjugating the equality $\rotv_w^{-\primt_1}\rotv_{v}^{\primt} = \rotv_{v_1}^x$ with $\sigma$ gives $\rotv_v^{\primt_1}\rotv_{w}^{-\primt} = \rotv_{v_1}^{-x}$. Together, these give the equality $\rotv_w^{-\primt_1}\rotv_{v}^{\primt}\rotv_v^{\primt_1}\rotv_{w}^{-\primt} = \id$, or in other words, $\rotv_v^{\primt+\primt_1} = \rotv_w^{\primt_1+\primt}$. This means that the rotation $\rotv_v^{\primt+\primt_1}$ fixes $w$, which is a neighbor of $v$, and hence it is $\id$. We conclude $\primt_1 = -\primt$ modulo $q$. Analogously, we can show $\primt_{-1} = -\primt$ modulo $q$, so we find $\rotv_v^{\primt}(v_{2m+1}) = v_{2m+1-\primt}$ for all $m$.

Repeating this argument for $\rotv_{v'}^{\primt}$, we find $\rotv_{v'}^{\primt}(v_{2m+3}) = v_{2m+3-\primt}$ for all $m$. This means $\rotv_v^{\primt} = \rotv_{v'}^{\primt}$, as both of these fix $v$ and map $v_1$ to $v_{1-\primt}$. This shows the first part of the lemma.

For the second part, let $u = v_1$ and note that the equality $\rotv_w^{-\primt_1}\rotv_{v}^{\primt} = \rotv_{v_1}^x$ is simply the equality $\rotv_w^{\primt}\rotv_{v}^{\primt} = \rotv_{u}^x$. Conjugating with the rotation around $vuw$ that maps $v$ to $u$ yields $\rotv_v^{\primt}\rotv_u^{\primt} = \rotv_w^x$. Together, these equations give $\rotv_w^{\primt}\rotv_{w}^x\rotv_u^{-\primt} = \rotv_u^x$, and hence $\rotv_w^{\primt+x} = \rotv_u^{\primt+x}$. We conclude $x = -\primt$ modulo $q$ as before, and hence $\rotv_w^{\primt}\rotv_{v}^{\primt} = \rotv_u^{-\primt}$. From here, we easily find $\rotv_v^{\primt}\rotv_u^{\primt}\rotv_w^{\primt} = \id$, as was to be shown.
\end{proof}


\begin{lem}\label{j=q->q=2} Suppose $\primt = q$. Then $\R = \fer(1)$.
\end{lem}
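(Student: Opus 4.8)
The plan is to read off the arithmetic forced by $\primt=q$, pass to a quotient map via Proposition~\ref{prop:modding}, identify that quotient as $\fer(1)$, and then show the covering is trivial.

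First, the arithmetic. Since $\primi\mid q$ and $\primt=\operatorname{lcm}(\primi,2)=q$, an even $\primi$ would give $\primt=\primi=q$, contradicting $\primi<q$; hence $\primi$ is odd and $q=2\primi$. In particular $q\equiv 2\pmod 4$, and $\langle\rotv_v\rangle\cong\ZZ_{\primi}\times\ZZ_2$ with $\rotv_v^2$ of order $\primi$ generating the odd factor and $\rotv_v^{\primi}$ the unique involution, whose fixed-point set is exactly $\V_v$ by Claim~$4$ of Lemma~\ref{lem:diagJ}. For a face $vuw$ the rotation $\rotv_v$ interchanges the diagonal-alignment classes $\V_u$ and $\V_w$, so $\rotv_v^2$ — and hence every conjugate of it — fixes each of the three diagonal-alignment classes setwise, while $\rotf_f$ permutes those three classes in a $3$-cycle and any edge-reversing automorphism induces a transposition of classes (the two endpoints of an edge lie in different classes).

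Second, the reduction. Put $M:=\langle\rotv_w^2:w\in\V\rangle\le\autp[\R]$, which is normal in $\aut[\R]$ because conjugates of $\rotv_w^2$ are again generators. By the previous paragraph every element of $M$ fixes each diagonal-alignment class setwise, so $M$ contains no edge-reversing automorphism; thus Proposition~\ref{prop:modding} yields a reflexive regular map $\overline\R=\aut[\R]/M$ with $\gr(\overline\R)=\gr(\R)/M$. The image of $\rotv_v$ has order $\bar q\mid 2$; $\bar q=1$ is impossible (it would make $\overline\R$ a one-vertex or two-vertex degenerate map, inconsistent with $\bar p>1$ dividing $3$), so $\bar q=2$, while $\rotf_f\notin M$ (it $3$-cycles the classes) gives $\bar p=3$; an Euler-characteristic count then forces $|\autp[\overline\R]|=6$, i.e.\ $\overline\R=\fer(1)$. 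Since $M\cap\langle\rotv_v\rangle=\langle\rotv_v^2\rangle$ (because $\bar q=2$), the $M$-orbit of $v$ has size $|M|/\primi=|\V_v|$ and lies inside the class $\V_v$, so the three fibres of the branched covering $\R\to\fer(1)$ are precisely the three diagonal-alignment classes; moreover $\rotf_f\rotv_v,\rotf_f\notin M$ makes the covering unramified away from the three vertices, with ramification index $\primi$ at every vertex of $\R$.

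Third, the endgame. If $q=2$ then $\primi=1$, $M=\{\operatorname{id}\}$, and $\R=\overline\R=\fer(1)$; so assume $q>2$, i.e.\ $\primi\ge3$ odd and $M\ne\{\operatorname{id}\}$. Then $\R\to\fer(1)$ is the Galois branched covering of the sphere ramified over three points with index $\primi$, so $M$ is a two-generated quotient of the triangle group $\Delta(\primi,\primi,\primi)$ (the relevant relation $\rotv_v^2\rotv_u^2\rotv_w^2=\operatorname{id}$ for a face $vuw$ holding automatically when $p=3$, from $\rotf_f^3=(\rotf_f\rotv_v)^2=\operatorname{id}$). Combining Riemann–Hurwitz for this covering with the standing bounds $\tfrac12<\den(\R)=q^2/(6|M|)\le\tfrac23$ pins $|M|$ into the interval $[\primi^2,\tfrac43\primi^2)$, and one checks that a two-generated group generated by elements of order exactly $\primi$ of such order must be $(\ZZ/\primi)^2$; then $\den(\R)=\tfrac23$, $|\autp[\R]|=6\primi^2$, and the presentation of $\autp[\R]$ in $(\rotf_f,\rotv_v)$ is forced to be that of $G_{\primi}$, so $\R=\fer(\primi)$ — but $\fer(\primi)$ has even period $2\ne q$, a contradiction. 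Hence $q=2$ and $\R=\fer(1)$. I expect the genuinely delicate steps to be the order-of-$M$ analysis together with the identification $\R=\fer(\primi)$, i.e.\ showing the Fermat cover is the unique map-theoretic $(\ZZ/\primi)^2$-cover of $\fer(1)$ with these ramification data.
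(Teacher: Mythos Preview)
Your reduction to $\overline{\R}=\fer(1)$ via $M=\langle\rotv_w^2:w\in\V\rangle$ is correct and nicely argued: $M$ is normal, preserves each diagonal-alignment class setwise (hence reverses no edge), and the relation $\rotv_v^2\rotv_u^2\rotv_w^2=\id$ for a face $vuw$ does follow from $\rotf^3=(\rotf\rotv)^2=\id$, since it rewrites as $(\rotv^2\rotf)^3=\id$. So $M$ is a two-generated quotient of $\Delta(\primi,\primi,\primi)$ with $\primi^2\le|M|<\tfrac{4}{3}\primi^2$.

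The genuine gap is everything after ``one checks''. That a quotient of $\Delta(\primi,\primi,\primi)$ with generators of exact order $\primi$ and size in $[\primi^2,\tfrac{4}{3}\primi^2)$ must be $(\ZZ/\primi)^2$ is not a check; for composite odd $\primi$ there are several candidate groups of order $\primi^2$ alone, and you give no mechanism to exclude the nonabelian ones (nor the orders strictly between $\primi^2$ and $\tfrac{4}{3}\primi^2$). Even granting $M\cong(\ZZ/\primi)^2$, the step ``the presentation of $\autp[\R]$ is forced to be that of $G_\primi$'' is essentially the content of the main theorem in this regime; you have not shown uniqueness of the extension, and the contradiction you want (``$\fer(\primi)$ has even period $2$'') would need to be verified directly in $G_\primi$, since Proposition~\ref{j=2} lies downstream of the lemma you are proving. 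As written, the endgame is a programme, not a proof.

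The paper sidesteps all of this by using the \emph{involutions} rather than the squares. It takes $H=\langle\rotv_v^{\primi},\rotv_w^{\primi}\rangle$ for neighbours $v,w$, shows $H\cong\Sym_3$ is normal with $|Hv|=3$, and splits on whether $Hv=\{v,w,u\}$. If not, Proposition~\ref{prop:modding} gives a quotient $\overline{\R}$ with $\den(\overline{\R})=\tfrac32\den(\R)>\tfrac34$, impossible by the results already in hand. If $Hv=\{v,w,u\}$, then for the face vertex $u'\in\V_u$ with $\rotv_{u'}(v)=w$ one has $\rotv_{u'}^{\primi}$ also swapping $v$ and $w$, forcing $\primi=1$ and hence $q=2$. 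This is short, self-contained, and uses only what precedes the lemma.
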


\begin{proof} We have $\primi < q$ because $\R$ is not $\tet$. Hence we have $\primi = \frac{q}{2}$ and $\primi$ is odd. For all $v,v' \in \V$, we have $\rotv_{v'}^2(v) \in \V_v$, and hence $\rotv_{v'}^2\rotv_{v}^{\primi}\rotv_{v'}^{-2}$ is a rotation of order $2$ that fixes $v$, and hence it is $\rotv_{v}^{\primi}$. This means that $[\rotv_v^{\primi},\rotv_{v'}^{2}] = \id$ for all $v,v' \in \V$. Let $v,w \in \V$ be neighbors and consider $H = \langle \rotv_v^{\primi},\rotv_w^{\primi} \rangle$. Let $u = \rotv_v^{\primi}(w)$; note that $u\not\in \V_v\cup\V_w$. Now $\rotv_v^{\primi}$ maps $\V_w$ to $\V_u$ and vice versa. Likewise, $\rotv_w^{\primi}$ maps $\V_v$ to $\V_u$ and vice versa.


It is easily verified that $\rotv_v^{\primi}\rotv_w^{\primi}\rotv_v^{\primi} = \rotv_u^{\primi} = \rotv_w^{\primi}\rotv_v^{\primi}\rotv_w^{\primi}$ (where the latter part follows from the fact that $\rotv_u^{\primi} = \rotv_{u'}^{\primi}$ for all $u' \in \V_u$). We now easily verify that $H \cong \mathrm{Sym}(3)$ and $H$ is a normal subgroup of $\aut[\R]$ (any conjugation of $\rotv_v^{\primi}$, $\rotv_w^{\primi}$ is a rotation of order $2$ around some vertex, and all such rotations are elements of $H$).

Observe that $|Hv| = 3$; this follows from the fact that $|Hv|$ contains elements from $\V_v$, $\V_w$ and $\V_u$ and $\rotv_v^{\primi}$ fixes $v$.

The argument now splits into two cases: either $Hv = \{v,w,u\}$ or $w,u\not\in Hv$.  Assume the latter holds. Because $|Hv| = 3$ and $S_v^J(w)=u$, the orbit of the edge $vw$ has size $6$ and hence $H$ reverses no edges; we may apply Proposition~\ref{prop:modding}. This tells us that $\aut[\R]/H$ is the automorphism group of a reflexive regular map $\overline{\R}$ with $|\V(\overline{\R})| = \frac{|\V|}{3}$, $q(\overline{\R}) = \frac{q}{2}$ with simple graph. However, this means $\den(\overline{\R}) = \frac{3}{2}\den(\R) > \frac{3}{4}$, which is not possible. We conclude that $Hv = \{v,w,u\}$.

Let $u' \in \V_u$ such that $vwu'$ is a face of $\R$ with $\rotv_{u'}(v) = w$. Observe that $\rotv_u^{\primi}$ interchanges $v$ and $w$ and hence the same is true for $\rotv_{u'}^{\primi}$. We find $\rotv_{u'}(\overrightarrow{(u',v)}) = \overrightarrow{(u',w)} = \rotv_{u'}^{\primi}(\overrightarrow{(u',v)})$. It follows $\primi = 1$ and hence $q = 2$. Together with $p=3$, we immediately see $\R = \fer(1)$.

\end{proof}

With this lemma, the following proposition is now within our reach.

\begin{prop}\label{dens2/3} Suppose $\frac{1}{2} < \den(\R) \leq \frac{2}{3}$. Then $\den(\R) = \frac{2}{3}$.
\end{prop}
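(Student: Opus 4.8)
The plan is to reduce everything to the single assertion $|\V_v| = \tfrac{q}{2}$. Since $\den(\R) = \tfrac{q}{|\V|}$ and $\primi < q$, the three $\sim$-classes meeting a face of $\gr(\R)$ are pairwise disjoint (Lemma~\ref{lem:diagJ}) and equinumerous, so $|\V| = 3|\V_v|$; and Lemma~\ref{lem:diagJ} already gives $|\V_v|\ge\tfrac{q}{2}$ via the $\tfrac{q}{2}$-element set $\{\rotv_w^{2i}(v)\}_i\subseteq\V_v$. Hence $\den(\R)\le\tfrac23$ automatically, and $\den(\R)=\tfrac23$ is equivalent to $|\V_v|=\tfrac q2$. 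If $\primt=q$ then Lemma~\ref{j=q->q=2} gives $\R=\fer(1)$, which has density $\tfrac23$; so from now on I assume $\primt<q$.

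For $\primt<q$ the idea is to collapse the three $\sim$-classes by modding out. Let $N\subseteq\autp[\R]$ be the group of orientation-preserving automorphisms fixing each $\sim$-class setwise. Since $\aut[\R]$ acts on the three classes as $\Sym_3$ (with $\rotv_v$ a transposition and $\rotf_f$ a $3$-cycle), $N$ is the intersection of $\autp[\R]$ with the kernel of $\aut[\R]\to\Sym_3$; hence $N\trianglelefteq\aut[\R]$, $N\subseteq\autp[\R]$, $[\autp[\R]:N]=6$, and $N$ reverses no edge (the endpoints of an edge lie in different classes, while any $\sim$-class contains no neighbor of its members), so Proposition~\ref{prop:modding} applies. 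Now $\operatorname{Stab}_N(v)=\langle\rotv_v\rangle\cap N=\langle\rotv_v^2\rangle$ has order $\tfrac q2$, while $|N|=\tfrac16|\autp[\R]|=\tfrac16 q|\V|=\tfrac q2|\V_v|$, so the $N$-orbit of $v$ has size $|\V_v|$; being contained in $\V_v$, it equals $\V_v$. Thus $\overline\R:=\R/N$ has exactly three vertices, and as $\rotv_v,\rotf_f\notin N$ while $\overline\R$ has three vertices we get $q(\overline\R)=2$ and $p(\overline\R)=3$, i.e.\ $\overline\R=\fer(1)$. The branched covering $\R\to\overline\R$ is unramified over edges and faces and has branch index $\tfrac q2$ over each vertex, so $|\V|=3\cdot|N|/(\tfrac q2)$ and $\den(\R)=\tfrac{q^2}{6|N|}$; in particular $\den(\R)\le\tfrac23$ forces $|N|\ge(\tfrac q2)^2$.

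It remains to prove $|N|\le(\tfrac q2)^2$, for then $|N|=(\tfrac q2)^2$ and $\den(\R)=\tfrac23$. This is immediate when $\primt=2$: by Lemma~\ref{k=1} the rotation $\rotv_a^2$ depends only on the $\sim$-class of $a$, so $N$ — being the normal closure of $\rotv_v^2$ in $\autp[\R]$, an index-$6$ subgroup — equals $\langle\rotv_v^2,\rotv_w^2\rangle$ (using $\rotv_v^2\rotv_u^2\rotv_w^2=\id$ from Lemma~\ref{k=1}); by Lemma~\ref{lem:primtand4commute} this group is abelian, and $\langle\rotv_v^2\rangle\cap\langle\rotv_w^2\rangle$ is trivial because any element of it fixes both $v$ and $w$, hence fixes $\overrightarrow{(v,w)}$, hence equals $\id$ by Lemma~\ref{lem:autdefbyedge}; so $|N|=(\tfrac q2)^2$. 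The main obstacle I foresee is therefore to eliminate the range $2<\primt<q$. One handle: the same edge-rigidity argument shows the abelian subgroup $\langle\rotv_a^{\primt}:a\in\V\rangle=\langle\rotv_v^{\primt}\rangle\times\langle\rotv_w^{\primt}\rangle$ has order exactly $(\tfrac q{\primt})^2$, so modding out by it and re-running the covering bookkeeping displays $\R$ as a tower of branched coverings whose density factors must be compatible with $\tfrac12<\den(\R)\le\tfrac23$; turning this into a genuine contradiction for $2<\primt<q$ — or, alternatively, showing directly that a reflection fixing an edge at $v$ forces the $N$-action on $\V_v$ to be free — is where the strict inequality $\den(\R)>\tfrac12$ must really be used, beyond the diameter bound of Lemma~\ref{diamleq2}.
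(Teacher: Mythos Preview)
Your reduction to the single equation $|\V_v|=\tfrac q2$ is correct, and the disposal of $\primt=q$ via Lemma~\ref{j=q->q=2} is fine. But the argument is genuinely incomplete, and not only in the range $2<\primt<q$ that you flag.

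Even for $\primt=2$ there is a gap. You need $|N|\le(\tfrac q2)^2$, and you try to get it from ``$N$ --- being the normal closure of $\rotv_v^2$''. But you have only shown the normal closure $H=\langle\rotv_v^2,\rotv_w^2\rangle$ is \emph{contained} in $N$; the reverse inclusion is exactly what is at stake. To close this you would need to know that $\autp[\R]/H$ has order at most~$6$ (then $H\subseteq N$ and $[\autp[\R]:N]=6$ force $H=N$). That does follow, because in $\autp[\R]/H$ the images of $\rotv_v,\rotf_f$ satisfy $\rotv_v^2=\rotf_f^3=(\rotv_v\rotf_f)^2=1$ and hence generate a quotient of $\Sym_3$; but you did not supply this step, and once you do, you have essentially reproduced the paper's mechanism in the special case $\primt=2$.

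The missing idea, which also dissolves the ``main obstacle'' $2<\primt<q$, is induction on $|\V|$ via the very quotient you mention at the end. Set $H=\langle\rotv_v^{\primt},\rotv_w^{\primt}\rangle$; by Lemmas~\ref{lem:primtand4commute} and~\ref{k=1} this is normal of order $(q/\primt)^2$, and Proposition~\ref{prop:modding} produces a reflexive regular map $\overline{\R}$ with simple graph. The point you do not exploit is that the quotient has \emph{exactly the same density}: each $H$-orbit of vertices has size $q/\primt$ and each $H$-orbit of edges at a fixed vertex has size $q/\primt$, so $|\V(\overline{\R})|=|\V|\cdot\primt/q$ and $q(\overline{\R})=\primt$, whence $\den(\overline{\R})=\den(\R)$. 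Since $\primt<q$ (the case $\primt=q$ was handled), $\overline{\R}$ is strictly smaller, and the induction hypothesis gives $\den(\R)=\den(\overline{\R})=\tfrac23$. There is no need to analyse $N$, to separate the cases $\primt=2$ and $\primt>2$, or to invoke the strict inequality $\den(\R)>\tfrac12$ beyond what is already baked into the standing assumptions.
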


\begin{proof} We apply induction to the cardinality of $\V$. If $\R$ satisfies $|\V(\R)| \leq 3$, then $\R$ has density $\frac{2}{3}$ simply because $\frac{2}{3}$ is the only possible fraction $\frac{n}{d}$ satisfying $\frac{1}{2} < \frac{n}{d} \leq \frac{2}{3}$ and $d \leq 3$. Suppose $|\V| > 3$ and assume the proposition is true for all $\R'$ with $|\V(\R')| < |\V|$.

Consider $H = \langle \rotv_v^{\primt},\rotv_w^{\primt}\rangle$. Note that $H$ is a subgroup of $\aut[\R]$ of order $(\frac{q}{\primt})^2$, using Lemma~\ref{lem:primtand4commute} and the fact that $\rotv_{v}^{x\primt}\rotv_w^{y\primt} = \id$ if and only if both $x$ and $y$ are $0$ mod $\frac{q}{j}$. Note that any conjugation of $\rotv_v^{\primt}$ or $\rotv_w^{\primt}$ is a rotation around some element of $\V$ of order $\frac{q}{\primt}$. In other words, any conjugation of these elements is of the form $\rotv_u^{k\primt}$. If $u \in \V_v$ or $u\in \V_w$, this is an element of $H$ by definition of $\primt$. If $u\not\in \V_v\cap\V_w$, we have $\rotv_u^{k\primt} = \rotv_v^{-k\primt}\rotv_w^{-k\primt} \in H$ by Lemma~\ref{k=1}. We conclude that $H$ is normal in $\aut[\R]$. Moreover, $H$ is generated by any pair $\rotv_{v'}^{\primt},\rotv_{w'}^{\primt}$ with $\V_{v'},\V_{w'}$ distinct.

By Proposition~\ref{prop:modding}, $\aut[\R]/H$ corresponds to another regular map. Note that $|Hv| = \frac{q}{\primt}$ since $\rotv_v^{\primt}$ fixes $v$ and $|\langle \rotv_w^{\primt} \rangle v| = \frac{q}{\primt}$. By the previous remark, $|Hv'| = \frac{q}{\primt}$ for all $v' \in \V$.

Moreover, if two edges $e,e'$ are incident to $v$, then $He = He'$ if and only if $e' = \rotv_v^{k\primt}e$ for some $k \in \ZZ$.


Let $\overline{\R}$ be the regular map corresponding to $\aut[\R]/H$. By the above remarks, we have $|\V(\overline{\R})| = |\V|\frac{\primt}{q}$ and $q(\overline{\R}) = \primt$. Observe that $\overline{\R}$ has a simple graph. Indeed, suppose that two edges $He$, $He'$ in $\gr(\overline{\R})$ have the same vertices, say $Hv_1$ and $Hv_2$. Then in the original graph, we have $e = (v_1,v_2)$ and $e' = (v_1',v_2')$ with $Hv_1 = Hv_1'$ and $Hv_2 = Hv_2'$. Since $Hv_1 = Hv_1'$, we may assume $v_1' = v_1$ by picking another representative if necessary. Since there is an edge from $v_1$ to $v_2$, we have $\V_{v_1} \neq \V_{v_2}$, and hence we have $Hv_2 = \langle \rotv_{v_1}^{\primt}\rangle v_2$. This means that $v_2' = \rotv_{v_1}^{k\primt}(v_2)$ for some $k \in \ZZ$. Since $\gr(\R)$ is simple, we conclude $e' = \rotv_{v_1}^{k\primt}(e)$ as well, and hence $He = He'$.

We now find $\den(\overline{\R}) = \den(\R)$. Moreover, because we assumed $|\V| > 3$, we have $\R \neq \fer(1)$ and hence $\primt \neq q$ by Lemma~\ref{j=q->q=2}. This means $|\V(\overline{\R})| < |\V|$. By our induction hypotheses, we have $\den(\R) = \den(\overline{\R}) = \frac{2}{3}$ as desired. This concludes the proof.
\end{proof}

\begin{cor} If $\frac{1}{2} < \den(\R) \leq \frac{2}{3}$ and $v,w,u \in \V$ with $\V = \V_v\cup \V_w\cup\V_u$, then $\dD{1}{v} = \V_w\cup\V_u$.
\end{cor}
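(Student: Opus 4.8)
The plan is to read off the corollary from Proposition~\ref{dens2/3} together with the structural facts about diagonal-alignment classes collected in Lemma~\ref{lem:diagJ}. Under the standing assumptions in force at this point we already have $\primi < q$, so Claim $3$ of Lemma~\ref{lem:diagJ} gives $\V_v \neq \V$; since $\primi$ does not depend on the base vertex, the same claim applied at $w$ and at $u$ gives $\V_w \neq \V$ and $\V_u \neq \V$ as well. Using the dichotomy $|\V_x| \in \{\tfrac{1}{3}|\V|,|\V|\}$ (the one already invoked in the proof of Claim $3$), I conclude $|\V_v| = |\V_w| = |\V_u| = \tfrac{1}{3}|\V|$.

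Next I would exploit the hypothesis $\V = \V_v \cup \V_w \cup \V_u$ by a pure counting argument: a union of three sets each of size $\tfrac{1}{3}|\V|$ can equal all of $\V$ only if the three sets are pairwise disjoint (the vertex set is finite, being that of $\gr(\R)$). Hence $\V \setminus \V_v = \V_w \cup \V_u$, a disjoint union of cardinality $\tfrac{2}{3}|\V|$. On the other hand, $\primi < q$ together with Claim $5$ of Lemma~\ref{lem:diagJ} gives $\V_v \cap \dD{1}{v} = \emptyset$, i.e. $\dD{1}{v} \subseteq \V \setminus \V_v = \V_w \cup \V_u$.

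To finish, I would apply Proposition~\ref{dens2/3}, which forces $\den(\R) = \tfrac{2}{3}$, so that $|\dD{1}{v}| = \den(\R)\,|\V| = \tfrac{2}{3}|\V| = |\V_w \cup \V_u|$; since $\dD{1}{v} \subseteq \V_w \cup \V_u$ and these two finite sets have equal cardinality, they coincide. There is no genuine obstacle here: the only points requiring care are the bookkeeping that all three classes $\V_v,\V_w,\V_u$ really have size $\tfrac{1}{3}|\V|$ (so the disjointness count is legitimate) and that Proposition~\ref{dens2/3} applies, which it does because the corollary's hypotheses include $\tfrac{1}{2} < \den(\R) \le \tfrac{2}{3}$.
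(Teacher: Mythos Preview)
Your argument is correct and follows essentially the same route as the paper: establish $\dD{1}{v}\subseteq \V_w\cup\V_u$ via Lemma~\ref{lem:diagJ} (using $\primi<q$), compute $|\V_w\cup\V_u|=\tfrac{2}{3}|\V|$, and then invoke Proposition~\ref{dens2/3} to match cardinalities. The only difference is that you spell out the disjointness-and-size bookkeeping for the three diagonal-alignment classes, which the paper leaves implicit.
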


\begin{proof} We have $\dD{1}{v}\subseteq \V_w\cup\V_u$ by Lemma~\ref{lem:diagJ} (since $\R$ is not $\tet$, hence $\primi < q$), and $|\V_w\cup\V_u| = \frac{2}{3}|\V|$. As $|\dD{1}{v}| = q = \frac{2}{3}|\V|$ by the previous lemma, the result must hold.
\end{proof}

\begin{prop}\label{j=2} Suppose $\den(\R) = \frac{2}{3}$. Then $\primt = 2$.
\end{prop}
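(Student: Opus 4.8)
The plan is to prove the equivalent statement that $\rotv_v^2$ fixes $\V_v$ pointwise for one (hence every) vertex $v$: by definition $\primi$ is the least positive $j$ with $\rotv_v^j$ fixing $\V_v$, so this says $\primi\mid 2$, i.e. $\primt=\mathrm{lcm}(\primi,2)=2$.

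First I would record the shape of the graph. With $\den(\R)=\tfrac23$ and $q=2n$, the corollary above gives $\V=\V_v\sqcup\V_w\sqcup\V_u$ for a face $vwu$, each class of size $n$, with $\dD1v=\V_w\cup\V_u$; so $\gr(\R)=K_{n,n,n}$ and every vertex is adjacent to exactly the vertices outside its own class. Fix $v$. Then $\rotv_v$ acts on its $2n$ neighbours $\V_w\cup\V_u$ as a single $2n$-cycle interchanging $\V_w$ and $\V_u$; taking the neighbour $w_0$ in position $0$, the element $\rotv_{w_0}$ likewise acts on its $2n$ neighbours $\V_v\cup\V_u$ as a $2n$-cycle, so $\rotv_{w_0}^2$ restricts to an $n$-cycle on $\V_v$ and $v_j:=\rotv_{w_0}^{2j}(v)$ identifies $\V_v$ with $\ZZ/n$. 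Under this identification $\gamma:=\rotv_{w_0}^2|_{\V_v}$ is the unit shift, $\rho:=\rotv_v|_{\V_v}$ fixes $0$, $\rho^2=\rotv_v^2|_{\V_v}$, and $\mathrm{ord}(\rho)\mid q=2n$. The goal is now $\rho^2=\id$.

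Next I would bring in the relations. Because $p=3$, for the face $f$ incident to $v$ and $w_0$ with $\rotf_f(v)=w_0$ one has $\rotf_f^3=(\rotf_f\rotv_v)^2=(\rotf_f\rotv_{w_0})^2=\id$, hence $\rotf_f=\rotv_v\rotv_{w_0}$ and $(\rotv_v\rotv_{w_0}^2)^2=\id$; restricting to $\V_v$ (which both $\rotv_v$ and $\rotv_{w_0}^2$ preserve) gives $(\rho\gamma)^2=\id$. Reflexivity provides the reflection $\mir$ fixing $\overrightarrow{(v,w_0)}$, which preserves $\V_v$, acts there as the inversion $v_j\mapsto v_{-j}$, and conjugates $\rotv_v$ and $\rotv_{w_0}$ (hence $\rho$ and $\gamma$) to their inverses. (The other faces at $v$ and the other such reflections only reproduce these relations after conjugation by powers of $\rotv_v$.) So the problem is reduced to the following: a permutation $\rho$ of $\ZZ/n$ with $\rho(0)=0$, $(\gamma\rho)^2=\id$, $\nu\rho\nu=\rho^{-1}$ ($\gamma$ the unit shift, $\nu$ negation) and $\mathrm{ord}(\rho)\mid 2n$ must satisfy $\rho^2=\id$.

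The main obstacle is precisely this last statement, and it is not formal: the four displayed conditions by themselves admit $\rho$ with $\rho^2\ne\id$ (already for small $n$ one finds such $\rho$ of order $>2$), so the proof must inject further global information. I would supply it either from the fact that $\langle\rotv_v,\rotv_{w_0}\rangle=\autp[\R]$ has order exactly $6n^2$, or — more in the spirit of the preceding results — by passing to the quotient $\overline{\R}$ of Proposition~\ref{prop:modding} by $H=\langle\rotv_v^{\primt},\rotv_w^{\primt}\rangle$, which is a reflexive regular map of the same density $\tfrac23$ with $q(\overline{\R})=\primt$, so that $\primt=q$ is excluded via Lemma~\ref{j=q->q=2} and $\overline{\R}$ can be fed back into the argument. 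The real work is the bookkeeping that forces $\rho$ to be an involution; once $\rho^2=\id$ is established, $\rotv_v^2$ fixes $\V_v$ pointwise and $\primt=2$ follows immediately.
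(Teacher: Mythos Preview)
Your setup is sound: the identification $\V_v\cong\ZZ/n$ via $v_j=\rotv_{w_0}^{2j}(v)$, the relation $R_f=\rotv_v\rotv_{w_0}$, and hence $(\rho\gamma)^2=\id$ together with $\nu\rho\nu=\rho^{-1}$, are all correct. But as you yourself say, the heart of the argument --- forcing $\rho^2=\id$ --- is simply not carried out. Your suspicion that the four conditions $\rho(0)=0$, $(\rho\gamma)^2=\id$, $\nu\rho\nu=\rho^{-1}$, $\ord[\rho]\mid 2n$ are insufficient is correct: for $n=8$ the permutation $\rho=(1\ 7)(2\ 6\ 5\ 3)$ of $\ZZ/8$ satisfies all four (order $4\mid 16$) yet $\rho^2\neq\id$. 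Neither of your two proposed remedies closes the gap. Knowing $|\autp[\R]|=6n^2$ is just the density hypothesis rephrased and gives no new relation on $\rho$. Passing to the quotient $\overline{\R}$ with $q(\overline{\R})=\primt$ lets you run an induction on $|\V|$, but the inductive conclusion is only $\primt(\overline{\R})=2$; you would still need to transport this back to $\primt(\R)=2$, and nothing you have written explains how.

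The paper's proof supplies the missing structural ingredient, and it is not one of your two candidates: it is the triangle relation $\rotv_v^{\primt}\rotv_u^{\primt}\rotv_w^{\primt}=\id$ from Lemma~\ref{k=1}. Working instead with the action of $\rotv_v$ on the neighbours of $w$ (labelled $0,\dots,q-1$ with $v=0$), that relation yields $\rotv_v(2i+\primt)=\rotv_v(2i)-\primt$, so $\rotv_v$ descends to a permutation of the $\primt/2$ even residue classes modulo $\primt$. One then shows, using the reflection fixing $\overrightarrow{(v,w)}$, that the orbit of the class of $2$ under $\rotv_v$ has size exactly $\primt/2$ and hence hits every even class, including the class $0$ --- which $\rotv_v$ fixes. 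The only way a transitive action can have a fixed point is if there is a single class, i.e.\ $\primt=2$. This mod-$\primt$ periodicity is exactly the global information your framework lacks, and without it the proof cannot be completed.
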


\begin{proof} Let $v,w \in \V$ be neighbors. Label the neighbors of $w$ by $0,1,\ldots,q-1$ clockwise with $v = 0$. By our assumption on $\den(\R)$, the elements of $\V_v$ are precisely elements of the form $2i$ mod $q$.

Let $u = \rotv_v(w)$. Suppose $\rotv_v(2i) = 2i'$. Then $\rotv_v(2i+\primt) = \rotv_v(\rotv_w^{\primt}(2i)) = \rotv_u^{\primt}(\rotv_v(2i)) = \rotv_u^{\primt}(2i') = 2i'-\primt$, using the fact that $\rotv_u^{\primt}\rotv_w^{\primt} = \rotv_v^{-\primt}$ acts as the identity on $\V_v$. This means $\rotv_v$ acts on congruence classes of $2i$ mod $\primt$. Moreover, we have $\rotv_v(2+k\primt) = -(2+k\primt)$ for any $k \in \ZZ$ because $\rotv_v(2) = \rotv_v(\rotv_w^2(v)) = \rotv_w^{-2}(v) = -2$.

Let $\iota \in \ZZ_{>0}$ be minimal such that $\rotv_v^{\iota}(2) \cong 2 \mod \primt$. Since the number of even congruence classes modulo $j$ is $\frac{\primt}{2}$, we have $\iota \leq \frac{\primt}{2}$.  We now have $\rotv_v^{\iota+1}(2) = -\rotv_v^{\iota}(2)$. Let $\sigma$ be the reflection that fixes $\overrightarrow{(v,w)}$. It maps $i$ to $-i$ and moreover, it maps $\rotv_v^{\iota}(2)$ to $\rotv_v^{-\iota+1}(2)$. In particular, we can conclude $\rotv_v^{\iota+1}(2) = -\rotv_v^{\iota}(2) = \sigma \rotv_v^{\iota}(2) =  \rotv_v^{-\iota+1}(2)$. It follows $\rotv_v^{2\iota}$ fixes $2$ and hence it fixes $\V_v$ pointwise. This means $\primi | 2\iota$ and hence $\primt | 2\iota$. So $\iota \geq \frac{\primt}{2}$. This however means $\iota = \frac{\primt}{2}$. In particular, $2,\rotv_v(2),\ldots,\rotv_v^{\iota-1}(2)$ are $\frac{\primt}{2}$ distinct even congruence classes modulo $\primt$, so it must be all of them.

Note however that $\rotv_v$ fixes the congruence class $0$ mod $\primt$. Since some power of $\rotv_v$ maps $2$ to an element that is $0$ mod $\primt$, we see that $0$ must the only even congruence class mod $\primt$. This is only possible if $\primt = 2$, as was to be shown.
\end{proof}

We now restate and prove our main theorem.

\begin{thm} Suppose $\R$ is a regular map with simple graph satisfying $\den(\R) > \frac{1}{2}$. Then either $\R = \tet$ or $\R = \fer(n)$ for some $n \in \ZZ_{>0}$.
\end{thm}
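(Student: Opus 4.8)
The plan is to push the reductions already established down to a rigid description of $\autp[\R]$ and then recognise that description as the presentation $G_n$ defining $\fer(n)$. By Proposition~\ref{Prop:p=3}, $p=3$; by Proposition~\ref{classes}, if $q$ is odd then $\R=\tet$ and we are done, so assume $q$ even. Then $\R\neq\tet$ (as $\tet$ has $q=3$), so Propositions~\ref{dens2/3} and~\ref{j=2} apply and give $\den(\R)=\frac{2}{3}$ and $\primt=2$. Set $n:=q/2\in\ZZ_{>0}$; from $\den(\R)=\frac{2}{3}$ we get $|\V|=\frac{3}{2}q=3n$, so $|\autp[\R]|$, being the number of directed edges of $\R$, equals $q\,|\V|=6n^2$, which is exactly $|G_n|$ since $G_n\cong\ZZ_n^2\rtimes\Sym_3$.

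Fix an incident pair $(v,f)$ and write $\rotv=\rotv_v$, $\rotf=\rotf_f$. It suffices to show the four relators of $G_n$ vanish in $\autp[\R]$ for this generator pair: then there is a surjection $G_n\twoheadrightarrow\autp[\R]$ carrying the distinguished generator pair of $\autp[\fer(n)]=G_n$ to $(\rotf,\rotv)$, and it is an isomorphism because the two groups have order $6n^2$; hence $\R$ and $\fer(n)$ have a common standard map presentation and so are equal as regular maps, a regular map being completely determined by a standard map presentation. Three of the relators are immediate: $\rotf^3=\id$ because $p=3$; $\rotv^{2n}=\rotv^q=\id$; and $(\rotf\rotv)^2=\id$, which holds at any incident vertex and face of any regular map (Section~\ref{sec:background}).

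The remaining relator $[\rotf,\rotv]^3=\id$ is the crux. Since $\rotf$ is orientation-preserving, $\rotf\rotv_v\rotf^{-1}=\rotv_{\rotf(v)}$ and $w:=\rotf(v)$ is one of the two other vertices of $f$, so $[\rotf,\rotv]=\rotf\rotv\rotf^{-1}\rotv^{-1}=\rotv_w\rotv_v^{-1}$, and the task reduces to proving $(\rotv_w\rotv_v^{-1})^3=\id$ for adjacent vertices $v,w$ on a common face $f=vwu$. Here $\primt=2$ does the work: $\rotv_v^2$ fixes $\V_v$ pointwise; the rotations $\rotv_v^2,\rotv_w^2,\rotv_u^2$ pairwise commute (Lemma~\ref{lem:primtand4commute}) and satisfy $\rotv_v^2\rotv_w^2\rotv_u^2=\id$ (Lemma~\ref{k=1}); $\rotv_x^2=\rotv_{x'}^2$ whenever $x\sim x'$; and $\rotv_w\rotv_v^{-1}$ permutes the diagonal classes $\V_v,\V_u,\V_w$ cyclically, so its cube preserves each of them. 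Labelling $\dD{1}{w}$ by $\ZZ/q\ZZ$ exactly as in the proof of Proposition~\ref{j=2}, with $\V_v\cap\dD{1}{w}$ the even labels and $\V_u\cap\dD{1}{w}$ the odd labels, and using the reflection fixing $\overrightarrow{(v,w)}$ to pin down signs, one tracks the action of $\rotv_w\rotv_v^{-1}$ until its cube is seen to fix a directed edge, hence to be $\id$ by Lemma~\ref{lem:autdefbyedge}.

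I expect this last computation to be the only genuine obstacle. The reductions are all invoked wholesale, and passing from ``the four relators vanish'' to ``$\R=\fer(n)$'' is pure bookkeeping of orders; but $[\rotf,\rotv]^3=\id$ is a genuinely new relation --- it fails, for instance, in $\autp[\tet]$ --- so its proof must use the full force of $\primt=2$ together with reflexivity, and the delicate part is getting the indexing around $w$ exactly right rather than off by a shift or a sign. (The case $n=1$, where $\primt=q$, is in any case already settled by Lemma~\ref{j=q->q=2}.) Combined with the case $q$ odd, this establishes the theorem.
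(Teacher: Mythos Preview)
Your overall architecture is exactly the paper's: reduce via Propositions~\ref{Prop:p=3}, \ref{classes}, \ref{dens2/3}, and~\ref{j=2} to $p=3$, $q=2n$, $\den(\R)=\tfrac{2}{3}$, $\primt=2$; verify the four relators of $G_n$ for a chosen generator pair $(\rotf,\rotv)$; then count $|\autp[\R]|=q\,|\V|=6n^2=|G_n|$ to conclude $\R=\fer(n)$ via the standard map presentation. The three easy relators and the order comparison are handled correctly, and your reduction bookkeeping is fine.

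The one genuine gap is precisely where you flag it: you do not actually prove $[\rotf,\rotv]^3=\id$; you list some ingredients, describe a plan (label $\dD{1}{w}$, use a reflection to fix signs, track the action), and call it ``the only genuine obstacle''. As written, the proposal is therefore incomplete at the crux. Moreover, the paper's execution of this step is shorter and less delicate than your plan suggests---no full neighbor labeling and no reflection are used. Writing the commutator as $\rotv_v\rotv_w^{-1}$ (equivalently, your $\rotv_w\rotv_v^{-1}$ inverted), the paper simply tracks the single vertex $v$: one gets $\rotv_v\rotv_w^{-1}(v)=\rotv_v^2(w)$; then, using only that $\rotv_x^2$ fixes $\V_x$ pointwise (the content of $\primt=2$), $(\rotv_v\rotv_w^{-1})^2(v)=\rotv_v^{-1}(w)$; and one more application gives $(\rotv_v\rotv_w^{-1})^3(v)=v$. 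The roles of $v$ and $w$ in the expression are symmetric, so the same chase shows $(\rotv_w\rotv_v^{-1})^3$ fixes $w$, hence its inverse $(\rotv_v\rotv_w^{-1})^3$ does too; fixing both endpoints of $\overrightarrow{(v,w)}$, it is $\id$ by Lemma~\ref{lem:autdefbyedge}. So the step you anticipate as a delicate labeling-and-reflection computation is in the paper a five-line vertex track plus a $v\leftrightarrow w$ symmetry swap; you should replace your sketch by this argument.
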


\begin{proof} Suppose $\R$ is not $\tet$.  We have $p=3$ by Proposition~\ref{Prop:p=3}. By Lemma~\ref{lem:diagJ} and Proposition~\ref{classes}, we find $q$ is even and $\den(\R) \leq \frac{2}{3}$. Write $q = 2n$. By Lemma~\ref{dens2/3}, we have $\den(\R) = \frac{2}{3}$ and by Proposition~\ref{j=2}, we find $\primt = 2$.

Let $v \in \V$ and let $w$ be a neighbor of $v$. Let $f$ be the face on the left of $\overrightarrow{(v,w)}$. We claim that $[\rotv_v,\rotf_f]^3 = \id$.

Note that $[\rotv_v,\rotf_f] = \rotv_v\rotv_w^{-1}$. We have $\rotv_v\rotv_w^{-1}(v) = \rotv_v^2(w)$. Note that $\rotv_w^{-1}(\rotv_v^2(w)) = \rotv_w(\rotv_v^2(w)) = \rotv_v^{-2}(w)$ using the fact that $\rotv_v^2(w) \in \V_w$. So $(\rotv_v\rotv_w^{-1})^2(v) = \rotv_v^{-1}(w)$. We have $\rotv_v\rotv_w^{-1}\rotv_v^{-1}(w) = v$, showing $[\rotv_v,\rotf_f]^3 = (\rotv_v\rotv_w^{-1})^3$ fixes $v$.

On the other hand, we have $[\rotv_v,\rotf_f]^{-3} = (\rotv_w\rotv_v^{-1})^3$. By symmetry of the situation, it fixes $w$. But then $[\rotv_v,\rotf_f]^3$ also fixes $w$. This immediately implies $[\rotv_v,\rotf_f]^3 = \id$ as claimed.

It now follows that $\autp[\R]$ is a quotient group of $\langle \rotv,\rotf\mid \rotv^{2n}= \id,\rotf^3 = \id,(\rotv \rotf)^2 = \id,[\rotv,\rotf]^3 = \id\rangle$ using $\rotv = \rotv_v$ and $\rotf = \rotf_f$. So $\autp[\R]$ is a quotient group of $\autp[\fer(n)]$. On the other hand, we have $|\autp[\fer(n)]| = 6n^2 = 3n\cdot2n = |\V|\cdot q = |\autp[\R]|$. This means $\autp[\R] = \autp[\fer(n)]$ and hence $\R = \fer(n)$, as was to be shown.
\end{proof}

\bibliographystyle{alpha}
\bibliography{bibliography_plat_den}

\begin{thebibliography}{\v{S}06}

\bibitem[Bre87]{Bre87}
Ulrich Brehm.
\newblock Maximally symmetric polyhedral realizations of {D}yck's regular map.
\newblock {\em Mathematika. A Journal of Pure and Applied Mathematics},
  34(2):229--236, 1987.
\newblock
  \href{http://dx.doi.org/10.1112/S0025579300013474}{DOI:10.1112/S0025579300013474}.

\bibitem[CD01]{condob2001}
M.~Conder and P.~Dobcs\'{a}nyi.
\newblock Determination of all regular maps of small genus.
\newblock {\em Journal of Combinatorial Theory, Series B}, 81(2):224 -- 242,
  2001.

\bibitem[CM80]{coxmos1980}
H.S.M. Coxeter and W.O.J. Moser.
\newblock {\em Generators and relations for discrete groups}.
\newblock Ergebnisse der Mathematik und ihrer Grenzgebiete. Fourth edition,
  1980.

\bibitem[Con]{conderweb}
M.~Conder.
\newblock Non-orientable, chiral orientable and reflexible orientable regular
  maps.
\newblock Available via
  \href{http://www.math.auckland.ac.nz/~conder/}{http://www.math.auckland.ac.nz/$\sim$conder/}.

\bibitem[Hen13]{hendriks2013}
M.~Hendriks.
\newblock Platonic maps of low genus, 2013.
\newblock Available via
  \href{http://dx.doi.org/10.6100/96ec9f06-75df-4a6f-92f6-3f531b56155e}{DOI:10.6100/96ec9f06-75df-4a6f-92f6-3f531b56155e}.

\bibitem[Sch94]{schneps94}
Leila Schneps.
\newblock {\em The Grothendieck Theory of Dessins D'Enfants}, volume 200 of
  {\em LMS Lecture Note Series}.
\newblock Cambridge University Press, 1994.

\bibitem[\v{S}06]{Siran2006}
Jozef \v{S}ir\'{a}\v{n}.
\newblock Regular maps on a given surface: A survey.
\newblock {\em Topics in Discrete Mathematics}, 26:591--609, 2006.

\end{thebibliography}

\end{document}